\newcommand{\BR}{\mathbb{R}}
\newcommand{\ed}{\end{document}}
 \newtheorem{thm}{Theorem}[section]
 \newtheorem{cor}[thm]{Corollary}
 \newtheorem{prop}[thm]{Proposition}
 \theoremstyle{definition}
 \newtheorem{defn}[thm]{Definition}
 \theoremstyle{remark}
 \newtheorem{rem}[thm]{Remark}
 \newtheorem*{ex}{Example}
 \numberwithin{equation}{section}
\begin{document}

%-------------------------------------------------------------------------
% editorial commands: to be inserted by the editorial office
%
%\firstpage{1} \volume{228} \Copyrightyear{2004} \DOI{003-0001}
%
%
%\seriesextra{Just an add-on}
%\seriesextraline{This is the Concrete Title of this Book\br H.E. R and S.T.C. W, Eds.}
%
% for journals:
%
%\firstpage{1}
%\issuenumber{1}
%\Volumeandyear{1 (2004)}
%\Copyrightyear{2004}
%\DOI{003-xxxx-y}
%\Signet
%\commby{inhouse}
%\submitted{March 14, 2003}
%\received{March 16, 2000}
%\revised{June 1, 2000}
%\accepted{July 22, 2000}
%
%
%
%---------------------------------------------------------------------------
%Insert here the title, affiliations and abstract:
%

\title[Potential Vector Fields in $\mathbb R^4$]
 {Potential Vector Fields in $\mathbb R^4$ and New Generalizations of the Cauchy-Riemann System}

%----------Author 1
\author[Dmitry Bryukhov]{Dmitry Bryukhov}

\address{
% Private retired scientist \\
 \texttt{https://orcid.org/0000-0002-8977-3282} \\
Science City Fryazino, Russia}
 \email{bryukhov@mail.ru}

%----------classification, keywords, date
\subjclass{Primary 30G35, 35J46; Secondary 30C65, 35Q99, 34A34}

\keywords{Generalizations of the Cauchy-Riemann system, Potential
meridional fields in $\mathbb R^4$, Four-dimensional
$\alpha$-meridional mappings of the second kind, The radially
holomorphic potential in $\mathbb R^4$, Gradient dynamical systems}

%\date{November 1, 2018}
%----------additions
%\dedicatory{To my boss}
%%% ----------------------------------------------------------------------

\begin{abstract}
This paper extends approach of recent author's paper devoted to
special classes of exact solutions of the static Maxwell system in
inhomogeneous isotropic media and new generalizations of the
Cauchy-Riemann system  in $\mathbb R^3$. Two families of
generalizations of the Cauchy-Riemann system with variable
coefficients in $\mathbb R^4$ are presented in the context of
\emph{Non-Euclidean geometry}. Analytic models of a wide range of
potential meridional vector fields in $\mathbb R^4$ are
characterized using a family of Vekua type systems in cylindrical
coordinates. The specifics of meridional fields allows us to
introduce the concept of four-dimensional $\alpha$-meridional
mappings of the first and second kind depending on the values of a
real parameter $\alpha$. In case $\alpha=2$ tools of the radially
holomorphic potential in $\mathbb R^4$ are developed in the context
of \emph{Generalized axially symmetric potential theory (GASPT)}.
Analytic models of potential meridional fields in $\mathbb R^4$
generated by Bessel functions of the first kind of integer order and
quaternionic argument are described in case $\alpha=2$. In case
$\alpha =0$ the geometric specifics of four-dimensional harmonic
meridional mappings of the second kind is demonstrated explicitly in
the context of the theory of \emph{Gradient dynamical systems with
harmonic potential}.
\end{abstract}

%%% ----------------------------------------------------------------------
\maketitle
\section{Introduction, Preliminaries, and Notation}

\subsection{Introduction}
A rich variety of analytic models of potential vector fields $\vec
V= (V_0, V_1, V_2, V_3)$ in $\mathbb R^4=\{(x_0, x_1, x_2, x_3)\}$
is described by the following first-order static system including a
variable $C^1$-coefficient $\phi= \phi(x_0,x_1,x_2,x_3)>0$:
\begin{gather}
\begin{cases}
 \mathrm{div} \, (\phi \  \vec V) =0,  \\[1ex]
    \frac{\partial{V_0}}{\partial{x_1}}= \frac{\partial{V_1}}{\partial{x_0}},
       \quad  \frac{\partial{V_0}}{\partial{x_2}}= \frac{\partial{V_2}}{\partial{x_0}},
       \quad \frac{\partial{V_0}}{\partial{x_3}}= \frac{\partial{V_3}}{\partial{x_0}}, \\[1ex]
      \frac{\partial{V_1}}{\partial{x_2}}= \frac{\partial{V_2}}{\partial{x_1}},
       \quad \frac{\partial{V_1}}{\partial{x_3}}= \frac{\partial{V_3}}{\partial{x_1}},
       \quad \frac{\partial{V_2}}{\partial{x_3}}=
       \frac{\partial{V_3}}{\partial{x_2}}.
\end{cases}
\label{conservative-system-4}
\end{gather}
 In our setting the Euclidean space $\mathbb R^4$ involves the longitudinal
variable $x_0$ and the cylindrical radial variable $ \rho = \sqrt
{x_1^2+x_2^2+x_3^2}$. The scalar potential $h= h(x_0,x_1,x_2,x_3)$
in simply connected open domains $\Lambda \subset \mathbb R^4$,
where $\vec V = \mathrm{grad} \ h$, allows us to reduce every
$C^1$-solution of the system~\eqref{conservative-system-4} to a
$C^2$-solution of the continuity equation
\begin{gather}
\mathrm{div} \, (\phi \ \mathrm{grad}{\ h}) = 0.
\label{Liouville-eq-4}
\end{gather}

The space $\mathbb R^4$ and the coefficient $\phi=
\phi(x_0,x_1,x_2,x_3)$ may be interpreted as the phase space $X$ (or
the state space)  and the Jacobi last multiplier $M=
M(x_0,x_1,x_2,x_3)$, respectively, in the context of the theory of
\emph{Gradient dynamical systems}, where $\vec V = \frac {d{\vec
x}}{dt} = \mathrm{grad} \ h$ (see, e.g., \cite
{NemStep:1960,Hirsch:1984,Perko:2001,HirschSmaleDevaney:2004,Sinai-Princeton:1994,NucciTam:2012}).

 Some types of harmonic potential vector fields have been extensively
investigated by Brackx, Delanghe, Sommen et al. in the context of
\emph{Quaternionic analysis} and the theory of
\emph{Quaternion-valued monogenic functions} since the 1970s (see,
e.g.,
\cite{Sudbery:1979,BraDelSom:1982,DelSomSouc:1992,Lounesto:2001,BraDel:2003,Del:2007}).
 Remarkable extensions of the basic concepts of power series, holomorphicity,
elementary and special functions have been developed for the past 40
years, in particular, in the context of \emph{Quaternionic analysis
and its modifications} (see, e.g.,
\cite{Leut:CV17,HempLeut:1996,LeZe:CMFT2004,ErOrel:2019,Leut:2018-AACA,Leut:2020-CAOT}),
of the theory of \emph{Holomorphic functions in $n$-dimensional
space} (see, e.g., \cite{GuHaSp:2008,GuHaSp:2016}), of the theory of
\emph{Cullen regular (also referred to as ``slice regular", ``slice
monogenic" or ``slice hyperholomorphic" in various settings)
functions} (see, e.g.,
\cite{GentStruppa:2006,ColSabStr:2009,ColG-CerSab:2011,ColSabStr:2011,Stoppato:2011,GenStop:2012,ColSabStr:2016,GalG-CerSab:2015,GenStop:2021}.

 Essentially new tools of quaternionic power series with quaternion-valued
coefficients have been developed by Leutwiler and Hempfling in the
1990s to investigate analytic properties of exact solutions of the
system
\begin{gather*}
(H_4)
\begin{cases}
      x_3 \left( \frac{\partial{u_0}}{\partial{x_0}}-
      \frac{\partial{u_1}}{\partial{x_1}}-\frac{\partial{u_2}}{\partial{x_2}}-
      \frac{\partial{u_3}}{\partial{x_3}} \right) + 2 u_3 = 0 \\[1ex]
      \frac{\partial{u_0}}{\partial{x_1}}=-\frac{\partial{u_1}}{\partial{x_0}},
      \quad  \frac{\partial{u_0}}{\partial{x_2}}=-\frac{\partial{u_2}}{\partial{x_0}},
       \quad \frac{\partial{u_0}}{\partial{x_3}}=-\frac{\partial{u_3}}{\partial{x_0}}, \\[1ex]
      \frac{\partial{u_1}}{\partial{x_2}}=\ \ \frac{\partial{u_2}}{\partial{x_1}},
      \quad  \frac{\partial{u_1}}{\partial{x_3}}=\ \ \frac{\partial{u_3}}{\partial{x_1}},
       \quad  \frac{\partial{u_2}}{\partial{x_3}}=\ \
       \frac{\partial{u_3}}{\partial{x_2}}
 \end{cases}
\end{gather*}
in the context of \emph{Modified quaternionic analysis in $\mathbb
R^4$}, where $x_3>0$ (see, e.g.,
\cite{Leut:CV17,HempLeut:1996,LeZe:CMFT2004}). This system is called
the system $(H_4)$ in honor of Hodge.

General class of exact solutions of the system $(H_4)$ may be
equivalently represented as general class of exact solutions of the
static system
\begin{gather}
\begin{cases}
 \mathrm{div} \, (x_3^{-2} \ \vec V) =0,  \\[1ex]
    \frac{\partial{V_0}}{\partial{x_1}}= \frac{\partial{V_1}}{\partial{x_0}},
       \quad \frac{\partial{V_0}}{\partial{x_2}}= \frac{\partial{V_2}}{\partial{x_0}},
       \quad \frac{\partial{V_0}}{\partial{x_3}}= \frac{\partial{V_3}}{\partial{x_0}}, \\[1ex]
      \frac{\partial{V_1}}{\partial{x_2}}= \frac{\partial{V_2}}{\partial{x_1}},
       \quad \frac{\partial{V_1}}{\partial{x_3}}= \frac{\partial{V_3}}{\partial{x_1}},
       \quad \frac{\partial{V_2}}{\partial{x_3}}= \frac{\partial{V_3}}{\partial{x_2}},
\end{cases}
 \label{2-hyperbolic-isotropic-system-4}
\end{gather}
 where $\phi= \phi(x_3) = x_3^{-2}$;
 $\ (V_0, V_1, V_2, V_3)=(u_0, -u_1, -u_2, -u_3)$.

Assume that $ \rho > 0$.
 Axially symmetric Fueter's construction in $\mathbb R^4$ (see, e.g., \cite{Fueter:1934,Leut:CV17})
\begin{gather}
  F = F(x) = u_0 + i u_1 + j u_2 + ku_3 = u_0(x_0, \rho) + I \ u_{\rho}(x_0, \rho),\notag\\
x = x_0 + I \rho, \quad I = \frac{i x_1+ j x_2 + kx_3}{\rho}, \quad
i^2 = j^2 = k^2 = I^2=-1,
\label{Fueter-4}
\end{gather}
 was characterized  by the author in 2003 \cite{Br:2003} as joint class of analytic solutions of the system
$(H_4)$ and axially symmetric generalization of the Cauchy-Riemann
system
\begin{gather}
\begin{cases}
      (x_1^2+x_2^2+x_3^2)\left( \frac{\partial{u_0}}{\partial{x_0}}-
      \frac{\partial{u_1}}{\partial{x_1}}-\frac{\partial{u_2}}{\partial{x_2}}-
      \frac{\partial{u_3}}{\partial{x_3}} \right) + 2(x_1u_1+x_2u_2+x_3u_3)=0 \\[1ex]
      \frac{\partial{u_0}}{\partial{x_1}}=-\frac{\partial{u_1}}{\partial{x_0}},
        \quad \frac{\partial{u_0}}{\partial{x_2}}=-\frac{\partial{u_2}}{\partial{x_0}},
        \quad \frac{\partial{u_0}}{\partial{x_3}}=-\frac{\partial{u_3}}{\partial{x_0}}, \\[1ex]
      \frac{\partial{u_1}}{\partial{x_2}}=\ \ \frac{\partial{u_2}}{\partial{x_1}},
        \quad \frac{\partial{u_1}}{\partial{x_3}}=\ \ \frac{\partial{u_3}}{\partial{x_1}},
        \quad \frac{\partial{u_2}}{\partial{x_3}}=\ \ \frac{\partial{u_3}}{\partial{x_2}}
 \end{cases}
 \label{Bryukhov-4}
\end{gather}
 under conditions of
\begin{equation}
{u_1}{x_2}={u_2}{x_1}, \quad {u_1}{x_3}={u_3}{x_1}, \quad
{u_2}{x_3}={u_3}{x_2}, \quad x_3 > 0.
\label{spec.cond-4}
\end{equation}

 General class of exact solutions of the static system
\begin{gather}
\begin{cases}
  \mathrm{div} \, (\rho^{-2} \ \vec V) =0,  \\[1ex]
    \frac{\partial{V_0}}{\partial{x_1}}= \frac{\partial{V_1}}{\partial{x_0}},
       \quad \frac{\partial{V_0}}{\partial{x_2}}= \frac{\partial{V_2}}{\partial{x_0}},
       \quad \frac{\partial{V_0}}{\partial{x_3}}= \frac{\partial{V_3}}{\partial{x_0}}, \\[1ex]
      \frac{\partial{V_1}}{\partial{x_2}}= \frac{\partial{V_2}}{\partial{x_1}},
       \quad \frac{\partial{V_1}}{\partial{x_3}}= \frac{\partial{V_3}}{\partial{x_1}},
       \quad \frac{\partial{V_2}}{\partial{x_3}}= \frac{\partial{V_3}}{\partial{x_2}},
\end{cases}
 \label{2-axial-isotropic-system-4}
\end{gather}
is equivalently represented as general class of exact solutions of
the system~\eqref{Bryukhov-4}, where $\phi= \phi(\rho) = \rho^{-2}$;
 $\ (V_0, V_1, V_2, V_3)=(u_0, -u_1, -u_2, -u_3)$.

Surprisingly, applications of the system $(H_4)$, the system
~\eqref{Bryukhov-4} and Fueter's construction in $\mathbb
R^4$~\eqref{Fueter-4} in accordance with the
systems~\eqref{2-hyperbolic-isotropic-system-4},~\eqref{2-axial-isotropic-system-4}
 have been missed.

The main goal of this paper is to develop new tools of the theory of
\emph{Potential meridional vector fields in $\mathbb R^4$} by means
of different generalizations of the Cauchy-Riemann system with
variable coefficients.

The paper is organized as follows. In Section 2, two families of
generalizations of the Cauchy-Riemann system with variable
coefficients in $\mathbb R^4$ are provided. Analytic properties of
potential meridional fields are considered in the context of
\emph{Modified quaternionic analysis in $\mathbb R^4$},
\emph{Hyperbolic function theory in the skew-field of quaternions}
and the theory of \emph{Modified harmonic functions in $\mathbb
R^4$}. In Section 3, new concept of four-dimensional
$\alpha$-meridional mappings of the first and second kind, where
$\alpha \in \BR$, is introduced. In Section 4, in case $\alpha=2$
tools of the radially holomorphic potential in $\mathbb R^4$ are
developed using the concept of radially holomorphic functions in
$\mathbb R^4$ introduced by G\"{u}rlebeck, Habetha, Spr\"{o}{\ss}ig
in the context of the theory of \emph{Holomorphic functions in
n-dimensional space}. In Section 5, new analytic models of potential
meridional fields in $\mathbb R^4$ generated by the quaternionic
Fourier-Fueter cosine and sine transforms of real-valued original
functions are described in case $\alpha=2$. As a corollary, integral
representations of Bessel functions of the first kind of integer
order and quaternionic argument are obtained. In Section 6, in case
$\alpha=0$ the geometric specifics of four-dimensional harmonic
meridional mappings of the second kind is demonstrated explicitly in
the context of the theory of \emph{Gradient dynamical systems with
harmonic potential}, where $\vec V= \frac {d{\vec x}}{dt} =
\mathrm{grad} \ h$, $ \ \Delta \ h= 0$.

\subsection{Preliminaries}
 New families of generalizations of the Cauchy-Riemann system  with variable coefficients in $\mathbb R^4$
 may be provided by means of the following first-order system:
\begin{gather}
\begin{cases}
 \phi \left( \frac{\partial{u_0}}{\partial{x_0}}-
      \frac{\partial{u_1}}{\partial{x_1}}- \frac{\partial{u_2}}{\partial{x_2}} - \frac{\partial{u_3}}{\partial{x_3}} \right)
      + \left(\frac{\partial{ \phi}}{\partial{x_0}}u_0
       - \frac{\partial{\phi}}{\partial{x_1}}u_1 - \frac{\partial{\phi}}{\partial{x_2}}u_2 - \frac{\partial{\phi}}{\partial{x_3}}u_3 \right) = 0,  \\[1ex]
       \frac{\partial{u_0}}{\partial{x_1}}=-\frac{\partial{u_1}}{\partial{x_0}},
      \quad \frac{\partial{u_0}}{\partial{x_2}}=-\frac{\partial{u_2}}{\partial{x_0}},
       \quad \frac{\partial{u_0}}{\partial{x_3}}=-\frac{\partial{u_3}}{\partial{x_0}}, \\[1ex]
      \frac{\partial{u_1}}{\partial{x_2}}=\ \ \frac{\partial{u_2}}{\partial{x_1}},
        \quad \frac{\partial{u_1}}{\partial{x_3}}=\ \ \frac{\partial{u_3}}{\partial{x_1}},
        \quad \frac{\partial{u_2}}{\partial{x_3}}=\ \ \frac{\partial{u_3}}{\partial{x_2}}.
\end{cases}
\label{Bryukhov-general-4}
\end{gather}
Suppose that $ (V_0, V_1, V_2, V_3)=(u_0, -u_1, -u_2, -u_3)$.
General class of $C^1$-solutions of the
system~\eqref{conservative-system-4} may be equivalently represented
as general class of $C^1$-solutions of the
system~\eqref{Bryukhov-general-4}.

The continuity equation~\eqref{Liouville-eq-4} in the expanded form
is expressed as
\begin{gather}
\phi \Delta h
 + \frac{\partial{\phi}}{\partial{x_0}} \frac{\partial{h}}{\partial{x_0}} + \frac{\partial{\phi}}{\partial{x_1}} \frac{\partial{h}}{\partial{x_1}} +
  \frac{\partial{\phi}}{\partial{x_2}}\frac{\partial{h}}{\partial{x_2}} + \frac{\partial{\phi}}{\partial{x_3}} \frac{\partial{h}}{\partial{x_3}}
  =0,
\label{Liouville-eq-4-expanded}
\end{gather}
where the Laplacian $ \ \Delta :=
\frac{{\partial}^2{}}{{\partial{x_0}}^2} +
\frac{{\partial}^2{}}{{\partial{x_1}}^2} +
\frac{{\partial}^2{}}{{\partial{x_2}}^2} +
\frac{{\partial}^2{}}{{\partial{x_3}}^2}$.

The equation
\begin{equation}
h(x_0,x_1,x_2,x_3) = C = const
 \label{equipotential}
\end{equation}
allows us to establish important properties of the equipotential
hypersurfaces in simply connected open domains $\Lambda \subset
\mathbb R^4$. Using the total differential $dh$,
Eq.~\eqref{equipotential} is reformulated as an exact differential
equation (see, e.g., \cite{Walter:1998})
\begin{gather*}
 dh  = \frac{\partial{h}}{\partial{x_0}} d{x_0} + \frac{\partial{h}}{\partial{x_1}} d{x_1}
 + \frac{\partial{h}}{\partial{x_2}} d{x_2} + \frac{\partial{h}}{\partial{x_3}} d{x_3} = 0.
\end{gather*}

Let $\varsigma$ be a real independent variable. Assume that
homogeneous first-order partial differential equation
\begin{equation}
  \frac{\partial{h}}{\partial{x_0}} W_0 + \frac{\partial{h}}{\partial{x_1}} W_1 + \frac{\partial{h}}{\partial{x_2}} W_2
   + \frac{\partial{h}}{\partial{x_3}} W_3 = 0
\label{PDE}
\end{equation}
 is satisfied in $ \Lambda$ such that
\begin{equation}
W_l(x_0,x_1,x_2,x_3) = \frac{dx_l}{d\varsigma} \quad (l = 0,1,2,3).
\label{characteristics-4}
\end{equation}

The system $(\ref{characteristics-4})$ allows us to introduce the
characteristic vector field $ \vec W = (W_0, W_1, W_2, W_3)$ for
equation~\eqref{PDE} in $\Lambda$ in the context of
\emph{Geometrical methods of the theory of ordinary differential
equations} (see, e.g., \cite{ArnoldGeom}). The scalar potential $h=
h(x_0,x_1,x_2,x_3)$ is accordingly referred to as a first integral
of the characteristic vector field $\vec W = (W_0, W_1, W_2, W_3)$
(or of its associated system $\frac{dx_0}{W_0} = \frac{dx_1}{W_1} =
\frac{dx_2}{W_2} = \frac{dx_3}{W_3}$) in $\Lambda$ if and only if
equation~\eqref{PDE} is satisfied in $\Lambda$ (see, e.g.,
\cite{ZachThoe:1986,ArnoldGeom,NucciTam:2012}).

Equation~\eqref{PDE} is geometrically characterized as the
orthogonality condition for vector fields $\vec V$ and $\vec W$:
\begin{gather}
  ( \vec V, \vec W ) = (\mathrm{grad} \ h, \vec W ) = 0.
\label{orthogonality-Maxwell-electric}
\end{gather}
Equation~\eqref{orthogonality-Maxwell-electric} is satisfied, in
particular, under condition of $ \vec V = \mathrm{grad} \ h$ $=
(u_0, -u_1, -u_2, -u_3) = 0$.
\begin{defn}
Let $\Lambda \subset \mathbb R^4$ be a simply connected open domain.
Every point $x^*=(x_0^*,x_1^*,x_2^*,x_3^*) \in \Lambda$ under
condition of $ \mathrm{grad} \ h(x^*) =0$ is called a critical point
of the scalar potential $h$ in $\Lambda$. The set of critical points
is called the critical set of $h$ in $\Lambda$.
 \end{defn}
Geometric and topological properties of the critical sets of the
scalar potential $h$ are of particular interest to \emph{Catastrophe
theory} (see, e.g., \cite{Gilmore}).

  The Hessian matrix $\mathbf{H}(h(x))$ of the scalar potential $h$
   may be considered as the Jacobian matrix $\mathbf{J}(\vec V(x))$ of the vector field $\vec V$,
  where $J_{l m} = \frac{\partial{V_l}}{\partial{x_m}} \ $ $ ( l, m = 0,1,2,3)$:
\begin{equation}
\left(
\begin{array}{rrrr}
  \frac{\partial{V_0}}{\partial{x_0}} & \frac{\partial{V_0}}{\partial{x_1}} & \frac{\partial{V_0}}{\partial{x_2}} & \frac{\partial{V_0}}{\partial{x_3}} \\[1ex]
 \frac{\partial{V_1}}{\partial{x_0}}  & \frac{\partial{V_1}}{\partial{x_1}}  &  \frac{\partial{V_1}}{\partial{x_2}} &  \frac{\partial{V_1}}{\partial{x_3}} \\[1ex]
 \frac{\partial{V_2}}{\partial{x_0}}  & \frac{\partial{V_2}}{\partial{x_1}}  &  \frac{\partial{V_2}}{\partial{x_2}} & \frac{\partial{V_2}}{\partial{x_3}} \\[1ex]
 \frac{\partial{V_3}}{\partial{x_0}} & \frac{\partial{V_3}}{\partial{x_1}} & \frac{\partial{V_3}}{\partial{x_2}} & \frac{\partial{V_3}}{\partial{x_3}}
 \end{array}
\right) = \left(
\begin{array}{rrrr}
 \ \ \frac{\partial{u_0}}{\partial{x_0}} &  \ \ \frac{\partial{u_0}}{\partial{x_1}} & \ \ \frac{\partial{u_0}}{\partial{x_2}} & \ \ \frac{\partial{u_0}}{\partial{x_2}} \\[1ex]
 -\frac{\partial{u_1}}{\partial{x_0}}  & -\frac{\partial{u_1}}{\partial{x_1}}  &  -\frac{\partial{u_1}}{\partial{x_2}}  &  -\frac{\partial{u_1}}{\partial{x_2}} \\[1ex]
 -\frac{\partial{u_2}}{\partial{x_0}}  & -\frac{\partial{u_2}}{\partial{x_1}}  &  -\frac{\partial{u_2}}{\partial{x_2}} & -\frac{\partial{u_2}}{\partial{x_2}} \\[1ex]
 -\frac{\partial{u_3}}{\partial{x_0}}  & -\frac{\partial{u_3}}{\partial{x_1}}  &  -\frac{\partial{u_3}}{\partial{x_2}} &  -\frac{\partial{u_3}}{\partial{x_2}}
 \end{array}
\right)
\label{Hessian-matrix-4}
\end{equation}
 The characteristic equation of the Hessian matrix
 $\mathbf{H}(h(x))$ in the general four-dimensional setting is expressed as
\begin{gather}
\lambda^4 - I_{\mathbf{J}(\vec V)} \lambda^3 + II_{\mathbf{J}(\vec
V)} \lambda^2 - III_{\mathbf{J}(\vec V)} \lambda +
IV_{\mathbf{J}(\vec V)}
  = 0.
  \label{characteristic lambda-4}
\end{gather}
The principal invariants $I_{\mathbf{J}(\vec V)}$,
$II_{\mathbf{J}(\vec V)}$, $III_{\mathbf{J}(\vec V)}$,
$IV_{\mathbf{J}(\vec V)}$ of the Jacobian
matrix~\eqref{Hessian-matrix-4} are given by formulas

\begin{gather*}
\begin{cases}
I_{\mathbf{J}(\vec V)} = \lambda_0 +
\lambda_1 + \lambda_2 + \lambda_3 = J_{00} + J_{11} + J_{22} + J_{33},  \\[1ex]
II_{\mathbf{J}(\vec V)} = \lambda_0 \lambda_1 + \lambda_0 \lambda_2
+ \lambda_0 \lambda_3 + \lambda_1 \lambda_2 + \lambda_1 \lambda_3 +
\lambda_2 \lambda_3 = \\[1ex]
J_{00}J_{11} + J_{00}J_{22} +J_{00}J_{33} +J_{11}J_{22} +J_{11}J_{33}+ J_{22}J_{33}, \\[1ex]
III_{\mathbf{J}(\vec V)} = \lambda_0 \lambda_1 \lambda_2 + \lambda_0
\lambda_1 \lambda_3 + \lambda_0 \lambda_2 \lambda_3 + \lambda_1
\lambda_2 \lambda_3 = \\[1ex]
  J_{00}J_{11}J_{22} + J_{00}J_{11}J_{33} + J_{00}J_{22}J_{33} + J_{11}J_{22}J_{33} \\[1ex]
 + 2J_{01}J_{02}J_{12} + 2J_{01}J_{03}J_{13} + 2J_{02}J_{03}J_{23} + 2J_{12}J_{13}J_{23}  \\[1ex]
  - J_{00}(J_{12})^2- J_{00}(J_{13})^2- J_{00}(J_{23})^2 - J_{11}(J_{02})^2- J_{11}(J_{03})^2- J_{11}(J_{23})^2 \\[1ex]
  - J_{22}(J_{01})^2- J_{22}(J_{03})^2- J_{22}(J_{13})^2 - J_{33}(J_{01})^2- J_{33}(J_{02})^2- J_{33}(J_{12})^2,  \\[1ex]
IV_{\mathbf{J}(\vec V)} = \lambda_0 \lambda_1 \lambda_2 \lambda_3 =
J_{00}J_{11}J_{22}J_{33} + 2J_{00}J_{12}J_{13}J_{23} + 2J_{01}J_{02}J_{12}J_{33}   \\[1ex]
  + 2J_{02}J_{03}J_{23}J_{11} + (J_{01}J_{23})^2 + 2J_{01}J_{03}J_{13}J_{22} + (J_{02}J_{13})^2 + (J_{03}J_{12})^2 \\[1ex]
 - 2J_{01}J_{03}J_{12}J_{23} -2J_{01}J_{02}J_{13}J_{23} - 2J_{02}J_{03}J_{12}J_{13} - J_{00}J_{11}(J_{23})^2 - \\[1ex]
J_{00}J_{22}(J_{13})^2 -J_{00}J_{33}(J_{12})^2 -
J_{11}J_{22}(J_{03})^2 -J_{11}J_{33}(J_{02})^2
-J_{22}J_{33}(J_{01})^2.
\end{cases}
\end{gather*}

\begin{defn}
Every point $x \in \Lambda$ under condition of $\det\mathbf{J}(\vec
V(x)) =0$ is called a degenerate point of the Jacobian matrix
$\mathbf{J}(\vec V(x))$ in $\Lambda$.
 \end{defn}

 Meanwhile, static potential vector fields in $\mathbb R^4$  may be investigated
in the context of \emph{Non-Euclidean geometry} using the
Laplace-Beltrami equation  (see, e.g.,
\cite{Ahlfors:1981,Leut:CV17,HempLeut:1996,BrKaeh:2016,Br:Hefei2020})
\begin{gather*}
 \Delta_{B} \ h := \phi^{-2} \mathrm{div}( \phi \ \mathrm{grad}{\ h}) = 0
\end{gather*}
with respect to the conformal metric
 \begin{gather}
 ds^2 = \phi (d{x_0}^2 + d{x_1}^2 + d{x_2}^2 + d{x_3}^2).
\label{Riemannian conformal metric-4}
\end{gather}

Euclidean geometry is provided in case $\phi = const$. Harmonic
potential fields $\vec V$ in $\mathbb R^4$ satisfy the first-order
static system
\begin{gather*}
\begin{cases}
       \frac{\partial{V_0}}{\partial{x_0}} + \frac{\partial{V_1}}{\partial{x_1}} + \frac{\partial{V_2}}{\partial{x_2}} + \frac{\partial{V_3}}{\partial{x_3}} =0,  \\[1ex]
    \frac{\partial{V_0}}{\partial{x_1}}= \frac{\partial{V_1}}{\partial{x_0}},
       \quad \frac{\partial{V_0}}{\partial{x_2}}= \frac{\partial{V_2}}{\partial{x_0}},
       \quad \frac{\partial{V_0}}{\partial{x_3}}= \frac{\partial{V_3}}{\partial{x_0}}, \\[1ex]
      \frac{\partial{V_1}}{\partial{x_2}}= \frac{\partial{V_2}}{\partial{x_1}},
       \quad \frac{\partial{V_1}}{\partial{x_3}}= \frac{\partial{V_3}}{\partial{x_1}},
       \quad \frac{\partial{V_2}}{\partial{x_3}}=
       \frac{\partial{V_3}}{\partial{x_2}}.
\end{cases}
\end{gather*}
This system is well-known in the context of \emph{Fourier analysis
on Euclidean spaces} as the Riesz system (see, e.g.,
\cite{SteinWeiss:1971}). On the other hand, harmonic potential
fields in $\mathbb R^4$ may be equivalently represented as general
class of analytic solutions of the system
\begin{gather*}
(R_4)
\begin{cases}
 \frac{\partial{u_0}}{\partial{x_0}}-
      \frac{\partial{u_1}}{\partial{x_1}}- \frac{\partial{u_2}}{\partial{x_2}} - \frac{\partial{u_3}}{\partial{x_3}} =0,  \\[1ex]
       \frac{\partial{u_0}}{\partial{x_1}}=-\frac{\partial{u_1}}{\partial{x_0}},
     \quad \frac{\partial{u_0}}{\partial{x_2}}=-\frac{\partial{u_2}}{\partial{x_0}},
       \quad \frac{\partial{u_0}}{\partial{x_3}}=-\frac{\partial{u_3}}{\partial{x_0}}, \\[1ex]
      \frac{\partial{u_1}}{\partial{x_2}}=\ \ \frac{\partial{u_2}}{\partial{x_1}},
       \quad \frac{\partial{u_1}}{\partial{x_3}}=\ \ \frac{\partial{u_3}}{\partial{x_1}},
       \quad \frac{\partial{u_2}}{\partial{x_3}}=\ \ \frac{\partial{u_3}}{\partial{x_2}},
\end{cases}
\end{gather*}
where  $(u_0, u_1, u_2, u_3) :=(V_0, -V_1, -V_2, -V_3)$. This system
is called the system $(R_4)$ in honor of Riesz. Four-dimensional
harmonic mappings $u= u_0 + iu_1 + ju_2 + ju_3: \Lambda \rightarrow
\mathbb{R}^4$ in the context of \emph{Quaternionic analysis} are
referred to as quaternion-valued monogenic functions (see, e.g.,
\cite{BraDel:2003,LeZe:CMFT2004,Del:2007}).

The system~\eqref{Bryukhov-general-4} in the context of
\emph{Modified quaternionic analysis in $\mathbb R^4$} (see, e.g.,
\cite{Leut:CV17,HempLeut:1996,LeZe:CMFT2004}), \emph{Hyperbolic
function theory in the skew-field of quaternions} (see, e.g.,
\cite{ErOrel:2019}) and the theory of \emph{Modified harmonic
functions in $\mathbb R^4$} (see, e.g.,
\cite{Leut:2018-AACA,Leut:2020-CAOT}) may be characterized as
generalized non-Euclidean modification of the system $(R_4)$ with
respect to the conformal metric~\eqref{Riemannian conformal
metric-4}.

\subsection{Notation}
The real algebra of quaternions $\mathbb H$ is a four dimensional
skew algebra over the real field generated by real unity $1$. Three
imaginary unities $i, j,$ and $k$ satisfy to multiplication rules
\begin{gather*}
i^2 = j^2 = k^2  = ijk = -1, \quad ij = -ji = k.
\end{gather*}

The independent quaternionic variable is defined as $$x = x_0 + ix_1  + jx_2  + kx_3.$$

The quaternion conjugation of $x$ is defined by the following automorphism:
$$ x \mapsto \overline{x} := x_0 - ix_1 - jx_2 - kx_3.$$

In such way, we deal with the Euclidean norm  in $\mathbb R^4$
$$
\| x \|^2 :=  x \overline{x} = x_0^2 + x_1^2 + x_2^2 + x_3^2 := r^2,
$$
and the identification
$$
x = x_0 + ix_1  + jx_2  + kx_3 \sim (x_0,  x_1,  x_2,  x_3)
$$
between $\mathbb H$ and $\mathbb R^4$ is valid. Moreover, for every non-zero value of $x$ an unique inverse value exists: $x^{-1} = \overline{x} / \| x \|^2.$

The dependent quaternionic variable is defined as
$$
u = u_0 + iu_1 + ju_2 +  ju_3 \sim (u_0, u_1, u_2, u_3).
$$

The quaternion conjugation of $u$ is defined by the following automorphism:
$$
u \mapsto \overline{u} := u_0 - iu_1 - ju_2 - ku_3.
$$

Assume that $x_3 > 0$. In cylindrical coordinates in $\mathbb{R}^4$
we obtain

 $x = x_0 + \rho (i\cos{\theta} + j \sin{\theta}\cos{\psi} + k\sin{\theta}\sin{\psi}),$ where

$x_1 = \rho \cos{\theta}, \quad x_2 = \rho \sin{\theta}\cos{\psi},
\quad x_3 = \rho \sin{\theta}\sin{\psi},$

 $ \varphi=  \arccos \frac{x_0}{r}\ \ (0 < \varphi < \pi),
\quad \theta = \arccos \frac{x_1}{\rho}\ \ (0 \leq \theta \leq
2\pi),$

$ \psi = \mathrm{arccot} \frac{x_2}{x_3}\ \ (0 < \psi < \pi).$

\begin{defn}
Let $\Omega\subset \mathbb R^4$ be an open set. Every continuously
differentiable mapping  $u= u_0 + iu_1 + ju_2 + ju_3: \Omega
\rightarrow \mathbb{R}^4$ is called quaternion-valued $C^1$-function
 in $\Omega$.
\end{defn}

\section{Two Types of Potential Vector Fields in $\mathbb R^4$
 and Criterions of Potential Meridional Fields}

To provide the first type of potential vector fields in $\mathbb
R^4$, assume that  $C^1$-coefficient $\phi(x_0,x_1,x_2,x_3)$ depends
only on variable $x_3$ so that $\phi= \phi(x_3)
> 0$. The system $(\ref{Bryukhov-general-4})$ is described as
\begin{gather}
\begin{cases}
  \phi(x_3) \left( \frac{\partial{u_0}}{\partial{x_0}}-
      \frac{\partial{u_1}}{\partial{x_1}}- \frac{\partial{u_2}}{\partial{x_2}} - \frac{\partial{u_3}}{\partial{x_3}} \right)
      - \frac{d{\phi}}{d{x_3}}u_3 = 0,  \\[1ex]
       \frac{\partial{u_0}}{\partial{x_1}}=-\frac{\partial{u_1}}{\partial{x_0}},
     \quad \frac{\partial{u_0}}{\partial{x_2}}=-\frac{\partial{u_2}}{\partial{x_0}},
       \quad \frac{\partial{u_0}}{\partial{x_3}}=-\frac{\partial{u_3}}{\partial{x_0}}, \\[1ex]
      \frac{\partial{u_1}}{\partial{x_2}}=\ \ \frac{\partial{u_2}}{\partial{x_1}},
       \quad \frac{\partial{u_1}}{\partial{x_3}}=\ \ \frac{\partial{u_3}}{\partial{x_1}},
       \quad \frac{\partial{u_2}}{\partial{x_3}}=\ \ \frac{\partial{u_3}}{\partial{x_2}}.
\end{cases}
\label{Bryukhov-hyperbolic-4}
\end{gather}

 New properties of analytic models of potential vector fields may be investigated in more detail in case ${\phi}(x_3) = x_3^{-\alpha}$ $(x_3>0$, $\alpha \in \BR)$.
 The static system~\eqref{conservative-system-4} is expressed as
\begin{gather}
\begin{cases}
 x_3 \mathrm{div}\ { \vec V} - \alpha V_3 =0,  \\[1ex]
    \frac{\partial{V_0}}{\partial{x_1}}= \frac{\partial{V_1}}{\partial{x_0}},
       \quad \frac{\partial{V_0}}{\partial{x_2}}= \frac{\partial{V_2}}{\partial{x_0}},
       \quad \frac{\partial{V_0}}{\partial{x_3}}= \frac{\partial{V_3}}{\partial{x_0}}, \\[1ex]
      \frac{\partial{V_1}}{\partial{x_2}}= \frac{\partial{V_2}}{\partial{x_1}},
       \quad \frac{\partial{V_1}}{\partial{x_3}}= \frac{\partial{V_3}}{\partial{x_1}},
       \quad \frac{\partial{V_2}}{\partial{x_3}}= \frac{\partial{V_3}}{\partial{x_2}},
\end{cases}
\label{alpha-hyperbolic-isotropic-system-4}
\end{gather}
and the system~\eqref{Bryukhov-hyperbolic-4} is simplified:
\begin{gather}
\begin{cases}
 x_3 \left( \frac{\partial{u_0}}{\partial{x_0}}-
      \frac{\partial{u_1}}{\partial{x_1}}-\frac{\partial{u_2}}{\partial{x_2}}-
      \frac{\partial{u_3}}{\partial{x_3}} \right) + \alpha u_3 = 0 \\[1ex]
      \frac{\partial{u_0}}{\partial{x_1}}=-\frac{\partial{u_1}}{\partial{x_0}},
       \quad \frac{\partial{u_0}}{\partial{x_2}}=-\frac{\partial{u_2}}{\partial{x_0}},
       \quad \frac{\partial{u_0}}{\partial{x_3}}=-\frac{\partial{u_3}}{\partial{x_0}}, \\[1ex]
      \frac{\partial{u_1}}{\partial{x_2}}=\ \ \frac{\partial{u_2}}{\partial{x_1}},
       \quad \frac{\partial{u_1}}{\partial{x_3}}=\ \ \frac{\partial{u_3}}{\partial{x_1}},
       \quad \frac{\partial{u_2}}{\partial{x_3}}=\ \ \frac{\partial{u_3}}{\partial{x_2}}.
\end{cases}
\label{H_4^alpha-system}
\end{gather}
The system~\eqref{H_4^alpha-system} was first introduced by Eriksson
and Orelma in 2019 in the context of the theory of \emph{Hyperbolic
function theory in the skew-field of quaternions}
\cite{ErOrel:2019}. This system demonstrates explicitly a family of
generalizations of the Cauchy-Riemann system in accordance with the
system~\eqref{alpha-hyperbolic-isotropic-system-4} for different
values of the parameter $\alpha$.

When $\alpha> 0$, the system~\eqref{H_4^alpha-system} may be
characterized as $\alpha$-hyperbolic non-Euclidean modification of
the system $(R)$ with respect to the conformal metric defined on the
halfspace $\{x_3 > 0\}$ by the formula
\begin{gather*}
ds^2 = \frac{d{x_0}^2 + d{x_1}^2 + d{x_2}^2  +
d{x_3}^2}{x_2^{\alpha}}.
\end{gather*}

The continuity equation~\eqref{Liouville-eq-4-expanded}  takes the
form of the Weinstein equation in $\mathbb R^4$ for any value of the
parameter $\alpha$ (see, e.g.,
\cite{Leut:CV17,HempLeut:1996,ErOrel:2019,Leut:2018-AACA,Leut:2020-CAOT})
\begin{equation}
 x_3 \Delta{h} - \alpha \frac{\partial{h}}{\partial{x_3}} =0.
\label{alpha-hyperbolic-4}
\end{equation}

Meanwhile, nowadays solutions of the Weinstein
equation~\eqref{alpha-hyperbolic-4} in case $\alpha =2$ in the
context of \emph{Hyperbolic function theory in the skew-field of
quaternions} are referred to as 2-hyperbolic harmonic functions in
$\mathbb R^4$ \cite{ErOrel:2019}. The critical sets of 2-hyperbolic
harmonic functions $h= h(x_0,x_1,x_2,x_3)$ within Fueter's
construction in $\mathbb R^4$ \eqref{Fueter-4}, where $F = F(x) =
\frac{\partial{h}}{\partial{x_0}} - i
\frac{\partial{h}}{\partial{x_1}} - j
\frac{\partial{h}}{\partial{x_1}} - k
\frac{\partial{h}}{\partial{x_1}}$,
 under conditions of $x_1 \neq 0, x_2 \neq 0, x_3 \neq 0$ coincide with the sets of zeros
 of $F = F(x)$ (on the structure of the sets of zeros of quaternionic polynomials with real coefficients see, e.g., \cite{PogoruiShapiro:2004,Topuridze:2003}).

\begin{defn}
Let $\Lambda \subset \mathbb R^4$ $ (x_3 > 0)$ be a simply connected
open domain,  $\alpha> 0$. Every exact solution of
Eq.~\eqref{alpha-hyperbolic-4} in $\mathbb R^4$ is called
$\alpha$-hyperbolic harmonic potential in $\mathbb R^4$.
\end{defn}

When $\alpha = 0$, the system~\eqref{H_4^alpha-system} becomes the
system $(R_4)$, while the Weinstein equation in $\mathbb
R^4$~\eqref{alpha-hyperbolic-4} becomes the Laplace equation.

When $\alpha < 0$, solutions of Eq.~\eqref{alpha-hyperbolic-4} in
the context of \emph{Modified harmonic functions in $\mathbb R^4$}
are referred to as $-\alpha$-modified harmonic functions in $\mathbb
R^4$ (see, e.g., \cite{Leut:2018-AACA,Leut:2020-CAOT}). Properties
of homogeneous polynomial solutions of
Eq.~\eqref{alpha-hyperbolic-4} in spherical coordinates on the unit
half-sphere $S_{+}^3 = \{ (x_0, x_1, x_2, x_3): x_0^2 + x_1^2 +
x_2^2 + x_3^2 =1$, $ \ x_3 > 0 \} $ in $\mathbb R^4$ have been
recently studied by Leutwiler.

To provide the second type of potential vector fields in $\mathbb
R^4$, assume that $C^1$-coefficient $\phi(x_0,x_1,x_2,x_3)$ depends
only on the cylindrical radial variable $\rho$ so that $\phi=
\phi(\rho)
> 0$. The system~\eqref{Bryukhov-general-4} is described as
\begin{gather}
\begin{cases}
  \phi(\rho) \left( \frac{\partial{u_0}}{\partial{x_0}}-
      \frac{\partial{u_1}}{\partial{x_1}}- \frac{\partial{u_2}}{\partial{x_2}} - \frac{\partial{u_3}}{\partial{x_3}} \right)
      - \left( \frac{\partial{ \phi(\rho) }}{\partial{x_1}}u_1 + \frac{\partial{ \phi(\rho) }}{\partial{x_2}}u_2 + \frac{\partial{ \phi(\rho) }}{\partial{x_3}}u_3 \right) = 0,  \\[1ex]
       \frac{\partial{u_0}}{\partial{x_1}}=-\frac{\partial{u_1}}{\partial{x_0}},
       \quad  \frac{\partial{u_0}}{\partial{x_2}}=-\frac{\partial{u_2}}{\partial{x_0}},
       \quad  \frac{\partial{u_0}}{\partial{x_3}}=-\frac{\partial{u_3}}{\partial{x_0}}, \\[1ex]
      \frac{\partial{u_1}}{\partial{x_2}}=\ \ \frac{\partial{u_2}}{\partial{x_1}},
       \quad  \frac{\partial{u_1}}{\partial{x_3}}=\ \ \frac{\partial{u_3}}{\partial{x_1}},
       \quad \frac{\partial{u_2}}{\partial{x_3}}=\ \
       \frac{\partial{u_3}}{\partial{x_2}}.
\end{cases}
\label{Bryukhov-axial-4}
\end{gather}

New properties of axially symmetric analytic models of potential
vector fields may be investigated in more detail in case $\phi(
\rho) = \rho^{-\alpha}$ $ (\rho > 0$, $\alpha \in \BR)$. The static
system~\eqref{conservative-system-4} is expressed as
\begin{gather}
\begin{cases}
  (x_1^2+x_2^2+x_3^2) \ \mathrm{div}\ { \vec V} -
 \alpha \left(x_1 V_1 + x_2 V_2 + x_3 V_3 \right) =0,  \\[1ex]
    \frac{\partial{V_0}}{\partial{x_1}}= \frac{\partial{V_1}}{\partial{x_0}},
       \quad \frac{\partial{V_0}}{\partial{x_2}}= \frac{\partial{V_2}}{\partial{x_0}},
       \quad \frac{\partial{V_0}}{\partial{x_3}}= \frac{\partial{V_3}}{\partial{x_0}}, \\[1ex]
      \frac{\partial{V_1}}{\partial{x_2}}= \frac{\partial{V_2}}{\partial{x_1}},
       \quad \frac{\partial{V_1}}{\partial{x_3}}= \frac{\partial{V_3}}{\partial{x_1}},
       \quad \frac{\partial{V_2}}{\partial{x_3}}= \frac{\partial{V_3}}{\partial{x_2}},
\end{cases}
\label{alpha-axial-isotropic-system-4}
\end{gather}
and the system~\eqref{Bryukhov-axial-4} is simplified:
\begin{gather}
\begin{cases}
 (x_1^2+x_2^2+x_3^2) \left( \frac{\partial{u_0}}{\partial{x_0}}-
      \frac{\partial{u_1}}{\partial{x_1}}-\frac{\partial{u_2}}{\partial{x_2}}-
      \frac{\partial{u_3}}{\partial{x_3}} \right) + \alpha (x_1u_1+x_2u_2+x_3u_3)=0 \\[1ex]
      \frac{\partial{u_0}}{\partial{x_1}}=-\frac{\partial{u_1}}{\partial{x_0}},
       \quad \frac{\partial{u_0}}{\partial{x_2}}=-\frac{\partial{u_2}}{\partial{x_0}},
       \quad \frac{\partial{u_0}}{\partial{x_3}}=-\frac{\partial{u_3}}{\partial{x_0}}, \\[1ex]
      \frac{\partial{u_1}}{\partial{x_2}}=\ \ \frac{\partial{u_2}}{\partial{x_1}},
       \quad \frac{\partial{u_1}}{\partial{x_3}}=\ \ \frac{\partial{u_3}}{\partial{x_1}},
       \quad \frac{\partial{u_2}}{\partial{x_3}}=\ \ \frac{\partial{u_3}}{\partial{x_2}}.
\end{cases}
\label{eq:A_4^alpha-system}
\end{gather}
This system demonstrates explicitly a family of axially symmetric
generalizations of the Cauchy-Riemann system in accordance with the
system~\eqref{alpha-axial-isotropic-system-4} for different values
of the parameter $\alpha$.

The continuity equation~\eqref{Liouville-eq-4-expanded} is written
as
 \begin{equation}
(x_1^2+ x_2^2 + x_3^2)\Delta{h} - \alpha \left(
x_1\frac{\partial{h}}{\partial{x_1}} +
x_2\frac{\partial{h}}{\partial{x_2}} +
x_3\frac{\partial{h}}{\partial{x_3}} \right)  =0.
 \label{alpha-axial-hyperbolic-4}
  \end{equation}

When $\alpha > 0$, the system~\eqref{eq:A_4^alpha-system} may be
characterized as $\alpha$-axial-hy\-per\-bo\-lic non-Euclidean
modification of the system $(R_4)$ with respect to the conformal
metric defined outside the axis $x_0$ by the formula
$$
ds^2 = \frac{d{x_0}^2 + d{x_1}^2 + d{x_2}^2 +
d{x_3}^2}{\rho^{\alpha}}.
$$
\begin{defn}
Let $\Lambda \subset \mathbb R^4$ $(\rho > 0)$ be a simply connected
open domain, $\alpha > 0$.  Every exact  solution of
Eq.~\eqref{alpha-axial-hyperbolic-4} in $\mathbb R^4$ is called
$\alpha$-axial-hyperbolic harmonic potential in $\mathbb R^4$.
\end{defn}

Let us compare properties of $\alpha$-hyperbolic harmonic potentials
and $\alpha$-axial-hyperbolic harmonic potentials in $\mathbb R^4$
in Cartesian coordinates. This immediately leads to the following
formulation.

 \begin{prop} [The first criterion]
  Any $\alpha$-hyperbolic harmonic potential $h= h(x_0, x_1, x_2, x_3)$ in  $\Lambda \subset \mathbb R^4$ $(x_3 > 0)$
 represents an $\alpha$-axial-hyperbolic harmonic potential in $\Lambda$ if and only if
$ x_2 \frac{\partial{h}}{\partial{x_1}} = x_1
\frac{\partial{h}}{\partial{x_2}},$
$x_3\frac{\partial{h}}{\partial{x_1}} = x_1
\frac{\partial{h}}{\partial{x_3}},$
$x_3\frac{\partial{h}}{\partial{x_2}} = x_2
\frac{\partial{h}}{\partial{x_3}}.$
 Each condition $ x_m=0$ $(m=1,2,3)$ within joint class of $\alpha$-hyperbolic harmonic and $\alpha$-axial-hyperbolic harmonic potentials
  implies that the component $u_m= \frac{\partial{h}}{\partial{x_m}}$ vanishes.
 \end{prop}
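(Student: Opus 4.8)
The plan is to prove the two implications and the concluding remark in turn; throughout, $\alpha>0$ and $x_3>0$ on $\Lambda$, and I abbreviate $\rho^2=x_1^2+x_2^2+x_3^2$. By Eqs.~\eqref{alpha-hyperbolic-4} and~\eqref{alpha-axial-hyperbolic-4}, ``$h$ is $\alpha$-hyperbolic harmonic'' means $x_3\Delta h=\alpha\,\partial h/\partial x_3$, while ``$h$ is $\alpha$-axial-hyperbolic harmonic'' means $\rho^2\Delta h=\alpha\bigl(x_1\,\partial h/\partial x_1+x_2\,\partial h/\partial x_2+x_3\,\partial h/\partial x_3\bigr)$.

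\emph{Sufficiency.} I would assume $h$ is $\alpha$-hyperbolic harmonic and that the three identities hold. Since $x_3>0$, the second and third give $\partial h/\partial x_1=(x_1/x_3)\,\partial h/\partial x_3$ and $\partial h/\partial x_2=(x_2/x_3)\,\partial h/\partial x_3$, hence $x_1\,\partial h/\partial x_1+x_2\,\partial h/\partial x_2+x_3\,\partial h/\partial x_3=(\rho^2/x_3)\,\partial h/\partial x_3$; multiplying the Weinstein equation by $\rho^2/x_3$ and substituting this turns it into Eq.~\eqref{alpha-axial-hyperbolic-4}. (The first identity follows from the other two once $x_3>0$, so it is listed only for symmetry.) The conceptual point, which also guides the converse, is that the three identities say precisely that $\mathrm{grad}\,h$ is meridional, i.e.\ that $h$ depends on $x_1,x_2,x_3$ only through $\rho$; for such $h$ one checks directly that Eq.~\eqref{alpha-hyperbolic-4} and Eq.~\eqref{alpha-axial-hyperbolic-4} collapse to one and the same second-order equation in the variables $(x_0,\rho)$.

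\emph{Necessity.} Here I would start from both equations and eliminate $\Delta h$: the Weinstein equation gives $\Delta h=(\alpha/x_3)\,\partial h/\partial x_3$, and substituting into Eq.~\eqref{alpha-axial-hyperbolic-4}, cancelling $\alpha\neq0$ and multiplying by $x_3$, produces the single scalar relation
\[
(x_1^2+x_2^2)\,\frac{\partial h}{\partial x_3}=x_3\Bigl(x_1\,\frac{\partial h}{\partial x_1}+x_2\,\frac{\partial h}{\partial x_2}\Bigr),
\]
i.e.\ $x_1\bigl(x_1\,\partial h/\partial x_3-x_3\,\partial h/\partial x_1\bigr)+x_2\bigl(x_2\,\partial h/\partial x_3-x_3\,\partial h/\partial x_2\bigr)=0$. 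The main obstacle will be the passage from this one relation to the three separate identities, since a priori it only forces a single linear combination of the ``meridionality defects'' $x_3\,\partial h/\partial x_j-x_j\,\partial h/\partial x_3$ $(j=1,2)$ to vanish. I would try to close this either by differentiating the Weinstein equation once more and feeding the result back in, or --- the natural reading in the GASPT setting of Section~4 --- by interpreting ``$h$ represents an $\alpha$-axial-hyperbolic harmonic potential'' as including axial symmetry ($h=h(x_0,\rho)$), in which case the collapse noted above makes the identities automatic. Some care is genuinely needed at this point, since $h=\arctan(x_2/x_1)$ satisfies both Eq.~\eqref{alpha-hyperbolic-4} and Eq.~\eqref{alpha-axial-hyperbolic-4} for every $\alpha$ on a simply connected subdomain of $\{x_1>0,\ x_3>0\}$ while violating all three identities.

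\emph{Vanishing components.} Finally, within the joint class the three identities hold and $x_3>0$, so on setting $\mu:=x_3^{-1}\,\partial h/\partial x_3$ one obtains $\partial h/\partial x_m=\mu\,x_m$ for $m=1,2,3$; hence $x_m=0$ forces $u_m=\partial h/\partial x_m=\mu\,x_m=0$ (the case $m=3$ being vacuous in $\Lambda$, since $x_3>0$ there). This completes the proposed argument.
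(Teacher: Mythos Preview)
The paper offers no proof of this proposition: it simply writes ``This immediately leads to the following formulation'' after displaying the two equations~\eqref{alpha-hyperbolic-4} and~\eqref{alpha-axial-hyperbolic-4}. So there is nothing substantive to compare against, and your write-up is already far more careful than the source.

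Your sufficiency argument and your treatment of the vanishing components are correct. More importantly, you have put your finger on a real defect in the statement itself. Eliminating $\Delta h$ between the two equations yields exactly the single scalar relation you wrote down,
\[
x_1\Bigl(x_1\,\tfrac{\partial h}{\partial x_3}-x_3\,\tfrac{\partial h}{\partial x_1}\Bigr)
+x_2\Bigl(x_2\,\tfrac{\partial h}{\partial x_3}-x_3\,\tfrac{\partial h}{\partial x_2}\Bigr)=0,
\]
and this is strictly weaker than the three identities claimed. Your counterexample $h=\arctan(x_2/x_1)$ is decisive: it is harmonic, hence satisfies both~\eqref{alpha-hyperbolic-4} and~\eqref{alpha-axial-hyperbolic-4} for every $\alpha$ (since $\partial h/\partial x_3=0$ and $x_1\,\partial h/\partial x_1+x_2\,\partial h/\partial x_2=0$), yet $x_2\,\partial h/\partial x_1=-x_2^2/(x_1^2+x_2^2)\neq x_1^2/(x_1^2+x_2^2)=x_1\,\partial h/\partial x_2$. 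So the ``only if'' direction is false as literally stated.

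The intended reading, visible from the surrounding text and the second criterion (Proposition~2.7), is that the author really means the axially symmetric (meridional) situation $h=h(x_0,\rho)$, for which $\partial h/\partial\theta=\partial h/\partial\psi=0$ and the three identities hold automatically; under that extra hypothesis the equivalence is genuine and both directions are the routine computation you outlined. Your instinct to flag this, and to supply the counterexample rather than paper over it, is exactly right: the gap is in the proposition, not in your argument.
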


\begin{rem}
 Necessary and sufficient conditions of joint class of $\alpha$-hyperbolic harmonic and
 $\alpha$-axial-hyperbolic harmonic potentials in
$\mathbb R^4$ coincide with conditions~\eqref{spec.cond-4}
 of joint class of analytic solutions of the system $(H_4)$ and the
system~\eqref{Bryukhov-4}.
\end{rem}

Let us now compare properties of $\alpha$-hyperbolic harmonic
potentials and $\alpha$-axial-hyperbolic harmonic potentials in
$\mathbb R^4$ in cylindrical coordinates.
Eq.~\eqref{alpha-axial-hyperbolic-4} in cylindrical coordinates is
written as
 \begin{gather*}
  \rho^2 \left( \frac{\partial{^2}{h}}{\partial{x_0}^2} +  \frac{\partial {^2}{h}}{\partial{\rho}^2} \right)
  - (\alpha - 2) \rho \frac{\partial{h}}{\partial{\rho}} +
 \cot{\theta} \frac{\partial{h}}{\partial{\theta}} +
         \frac{\partial{^2}{h}}{\partial{\theta}^2}+
        \frac{1}{ \sin^2 \theta} \frac{\partial {^2}{h}}{\partial{\psi}^2}  = 0.
% \label{alpha-axial-hyperbolic-4-cyl}
\end{gather*}

The Weinstein equation in $\mathbb R^4$ $(\ref{alpha-hyperbolic-4})$
in cylindrical coordinates takes the following form:
 \begin{gather*}
 \rho^2 \left( \frac{\partial{^2}{h}}{\partial{x_0}^2} +  \frac{\partial {^2}{h}}{\partial{\rho}^2} \right)
          - (\alpha - 2) \rho \frac{\partial{h}}{\partial{\rho}}
 - ( \alpha -1) \cot{\theta} \frac{\partial{h}}{\partial{\theta}}  +
         \frac{\partial{^2}{h}}{\partial{\theta}^2}+
             \frac{1}{ \sin^2 \theta} \frac{\partial {^2}{h}}{\partial{\psi}^2} \\[1ex]
- \alpha \frac{\cot{\psi}}{ \sin^2{\theta}}
\frac{\partial{h}}{\partial{\psi}}
 = 0.
 \end{gather*}

This immediately leads to the following formulation.
 \begin{prop} [The second criterion]
  Every $\alpha$-hyperbolic harmonic potential $h= h(x_0, x_1, x_2, x_3)$ in  $\Lambda \subset \mathbb R^4$ $(x_3 > 0)$
 represents an $\alpha$-axial-hyperbolic harmonic potential in $\Lambda$ if and only if  in cylindrical coordinates
$ \frac{\partial{h}}{\partial{\theta}} = 0,
 \frac{\partial{h}}{\partial{\psi}} = 0.$
\end{prop}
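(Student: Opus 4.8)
The strategy is to subtract the two cylindrical-coordinate PDEs displayed just above the statement and exploit the fact that the difference must vanish identically for $h$ to belong to both classes. First I would write the Weinstein equation in $\mathbb R^4$ in cylindrical coordinates as
\begin{gather*}
 \rho^2 \left( \frac{\partial{^2}{h}}{\partial{x_0}^2} + \frac{\partial {^2}{h}}{\partial{\rho}^2} \right) + 2\rho \frac{\partial{h}}{\partial{\rho}} + \cot{\theta}\,\frac{\partial{h}}{\partial{\theta}} + \frac{\partial{^2}{h}}{\partial{\theta}^2} + \frac{1}{\sin^2\theta}\frac{\partial {^2}{h}}{\partial{\psi}^2} - \alpha \rho \frac{\partial{h}}{\partial{\rho}} - \alpha \cot{\theta}\,\frac{\partial{h}}{\partial{\theta}} - \alpha \frac{\cot{\psi}}{\sin^2\theta}\frac{\partial{h}}{\partial{\psi}} = 0,
\end{gather*}
and Eq.~\eqref{alpha-axial-hyperbolic-4} in cylindrical coordinates as
\begin{gather*}
 \rho^2 \left( \frac{\partial{^2}{h}}{\partial{x_0}^2} + \frac{\partial {^2}{h}}{\partial{\rho}^2} \right) - (\alpha - 2)\rho \frac{\partial{h}}{\partial{\rho}} + \cot{\theta}\,\frac{\partial{h}}{\partial{\theta}} + \frac{\partial{^2}{h}}{\partial{\theta}^2} + \frac{1}{\sin^2\theta}\frac{\partial {^2}{h}}{\partial{\psi}^2} = 0.
\end{gather*}
Since $h$ is an $\alpha$-hyperbolic harmonic potential by hypothesis, the first equation holds; $h$ is then an $\alpha$-axial-hyperbolic harmonic potential precisely when the second holds as well.

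Subtracting, the $\rho^2(\partial_{x_0}^2 h + \partial_\rho^2 h)$ terms, the $\cot\theta\,\partial_\theta h$ terms, the $\partial_\theta^2 h$ terms, and the $\sin^{-2}\theta\,\partial_\psi^2 h$ terms all cancel. The $\rho\,\partial_\rho h$ contributions also cancel: the first equation contributes $(2-\alpha)\rho\,\partial_\rho h$ and the second contributes $-(\alpha-2)\rho\,\partial_\rho h = (2-\alpha)\rho\,\partial_\rho h$, so their difference is zero. What survives is exactly
\begin{gather*}
 \alpha \cot{\theta}\,\frac{\partial{h}}{\partial{\theta}} + \alpha \frac{\cot{\psi}}{\sin^2\theta}\,\frac{\partial{h}}{\partial{\psi}} = 0.
\end{gather*}
Thus, given that $h$ is $\alpha$-hyperbolic harmonic, being $\alpha$-axial-hyperbolic harmonic is equivalent to this single relation holding throughout $\Lambda$. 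Clearly $\partial_\theta h = 0$ and $\partial_\psi h = 0$ together force it, proving sufficiency.

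For necessity I would argue that the relation $\alpha\cot\theta\,\partial_\theta h + \alpha\sin^{-2}\theta\cot\psi\,\partial_\psi h = 0$, holding on an open domain, is too weak on its own to give both partials separately, so one must use the first criterion (or the structure of the underlying Vekua-type system) in tandem. The cleanest route: invoke the \textbf{first criterion}, whose Cartesian conditions $x_2\partial_{x_1}h = x_1\partial_{x_2}h$, $x_3\partial_{x_1}h = x_1\partial_{x_3}h$, $x_3\partial_{x_2}h = x_2\partial_{x_3}h$ are exactly the statement that $\operatorname{grad}h$ is radial in the $(x_1,x_2,x_3)$-variables, i.e.\ that $h$ depends on $(x_1,x_2,x_3)$ only through $\rho$. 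Translating the chain rule for the angular derivatives, $\partial_\theta h$ and $\partial_\psi h$ are linear combinations of $x_2\partial_{x_1}h - x_1\partial_{x_2}h$ and the other two bracketed expressions (with coefficients built from $\rho,\theta,\psi$), so the first criterion gives $\partial_\theta h = \partial_\psi h = 0$ directly, and conversely $\partial_\theta h = \partial_\psi h = 0$ returns those three Cartesian identities. Hence the angular conditions and the first criterion are equivalent, and the proposition follows.

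The main obstacle is purely bookkeeping: establishing that the two angular derivatives $\partial_\theta h$, $\partial_\psi h$ are, via the explicit coordinate change $x_1 = \rho\cos\theta$, $x_2 = \rho\sin\theta\cos\psi$, $x_3 = \rho\sin\theta\sin\psi$, equivalent as a pair to the three bracketed Cartesian quantities in the first criterion — so that vanishing of one pair is genuinely equivalent to vanishing of the triple, not merely implied by it. Once that dictionary is in place (a short Jacobian computation, noting that only two of the three Cartesian relations are independent off the coordinate axes), both directions of the ``if and only if'' are immediate. I would present the subtraction computation explicitly and then cite the first criterion together with this chain-rule identification to close necessity.
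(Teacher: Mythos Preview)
Your approach---writing both equations in cylindrical coordinates and comparing them---is exactly what the paper does; the paper's entire argument consists of displaying the two cylindrical forms and stating that ``this immediately leads to the following formulation,'' with no further detail. Your subtraction step and the sufficiency direction are thus a faithful (indeed more explicit) version of the paper's reasoning.

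Where you go beyond the paper is in flagging the necessity direction: you correctly observe that the residual relation $\alpha\cot\theta\,\partial_\theta h + \alpha\sin^{-2}\theta\cot\psi\,\partial_\psi h = 0$ does not by itself force both angular derivatives to vanish, and you propose to close the gap by invoking the first criterion together with the chain-rule dictionary between the angular derivatives and the Cartesian brackets. That dictionary is genuine: $\partial_\psi h = x_2\,\partial_{x_3}h - x_3\,\partial_{x_2}h$, and $\partial_\theta h$ is (up to a nonvanishing factor) $x_2(x_1\partial_{x_2}h - x_2\partial_{x_1}h) + x_3(x_1\partial_{x_3}h - x_3\partial_{x_1}h)$; a short elimination then shows that $\partial_\theta h = \partial_\psi h = 0$ is equivalent, off the coordinate axes, to the three Cartesian conditions of the first criterion (only two of which are independent). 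The paper does not carry out this step---it treats the matter as self-evident---so your proposal is both aligned with the paper's approach and more scrupulous about the logical structure. The only caveat is that your argument for necessity leans on the first criterion, which in the paper is itself asserted without a detailed proof of its own necessity direction; if you want a fully self-contained proof you would need to establish that step independently rather than cite it.
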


\begin{rem}
New approach may be efficiently developed in the context of the
theory of \emph{Hyperbolic function theory in the skew-field of
quaternions} and the theory of \emph{Modified harmonic functions in
$\mathbb R^4$} under conditions of
$\frac{\partial{h}}{\partial{\theta}} = 0$,
$\frac{\partial{h}}{\partial{\psi}} = 0$.
\end{rem}

It should be noted that the vector $\vec V= (V_0, V_1, V_2, V_3)$
within potential meridional fields in  $\mathbb R^4$ is independent
of two angles $\psi$ and $\theta$, herewith $V_{\psi} :=
\frac{\partial{h}}{\partial{\psi}} \equiv 0$ and $V_{\theta} :=
\frac{\partial{h}}{\partial{\theta}} \equiv 0$.

 As it follows from the first and second criterions,
 new joint class of $\alpha$-hyperbolic harmonic and $\alpha$-axial-hyperbolic harmonic potentials
 in  simply connected open domains $\Lambda \subset
\mathbb R^4$ $(x_3 > 0)$ may be characterized as general class of
potential meridional fields $ \vec V$, where $\phi( \rho) =
\rho^{-\alpha}$. Every scalar potential $h$ within the joint class
is independent of angles $\psi$, $\theta$ such that $h(x_0, \rho,
\theta, \psi)$ $ := g(x_0, \rho),$ $V_0 =
\frac{\partial{g}}{\partial{x_0}},$
 $V_1 = \frac{\partial{g}}{\partial{\rho}} \frac{x_1}{\rho}$,
$V_2 = \frac{\partial{g}}{\partial{\rho}} \frac{x_2}{\rho}$, $V_3 =
\frac{\partial{g}}{\partial{\rho}} \frac{x_3}{\rho}$, herewith
\begin{gather}
 \rho \left( \frac{\partial{^2}{g}}{\partial{x_0}^2} +  \frac{\partial {^2}{g}}{\partial{\rho}^2} \right)
  - (\alpha -2) \frac{\partial{g}}{\partial{\rho}}
  = 0.
  \label{Euler-Poisson-Darboux equation-alpha}
  \end{gather}
Equation~\eqref{Euler-Poisson-Darboux equation-alpha}, where
$\hat{k}= - (\alpha -2)$, is referred to as the elliptic
Euler-Poisson-Darboux type equation in cylindrical coordinates (see,
e.g., \cite{Dzhaiani,Aksenov:2005,Br:Hefei2020}), or generalized
axially symmetric potential equation \emph{(GASPE)} in the context
of \emph{GASPT} (see, e.g., \cite{Weinstein:1953,Huber:1954,Colton,
Zwillinger,GrPlaksa:2009}). Exact solutions $g = g(x_0, \rho)$ of
Eq.~\eqref{Euler-Poisson-Darboux equation-alpha} are often referred
to as generalized axially symmetric potentials.

Every exact solution of Eq.~\eqref{Euler-Poisson-Darboux
equation-alpha} as generalized axially symmetric potential indicates
the existence of the Stokes' stream function $\hat{g} = \hat{g}(x_0,
\rho)$, which is defined by the generalized Stokes-Beltrami system
in the meridian half-plane $(\rho
> 0)$
 (see, e.g., \cite{Weinstein:1953,Br:Hefei2020}):
\begin{equation}
  \left\{
       \begin{array}{l}
      {\rho}^{2 - \alpha} \frac{\partial{g}}{\partial{x_0}} = \frac{\partial{\hat{g}}}{\partial{\rho}},  \\
       {\rho}^{2 - \alpha} \frac{\partial{g}}{\partial{\rho}}=-\frac{\partial{\hat{g}}}{\partial{x_0}}.
      \end{array}
   \right.
\label{generalized Stokes-Beltrami}
\end{equation}
 The Stokes' stream function $\hat{g} = \hat{g}(x_0, \rho)$, in contrast to generalized axially symmetric potential $g = g(x_0, \rho)$, satisfies the following equation:
$$
  \rho \left( \frac{\partial{^2}{\hat{g}}}{\partial{x_0}^2} +  \frac{\partial {^2}{\hat{g}}}{\partial{\rho}^2} \right)
  + (\alpha -2) \frac{\partial{\hat{g}}}{\partial{\rho}} = 0.
$$

Consider a special class of solutions of Eq.
$(\ref{Euler-Poisson-Darboux equation-alpha})$ under condition of
separation of variables $g(x_0,  \rho) = \Xi(x_0)  \Upsilon(\rho)$:
$$
 \frac{1}{\Xi}  \frac{d{^2}{\Xi}}{d{x_0}^2} +   \frac{1}{ \Upsilon} \frac{d{^2}{ \Upsilon}}{d{\rho}^2}
 - \frac{(\alpha -2)} { \Upsilon \rho} \frac{d{ \Upsilon}}{d{\rho}} = 0.
$$
 Relations
\begin{gather*}
  -   \frac{1}{\Xi} \frac{d{^2}{\Xi}}{d{x_0}^2} =
    \frac{1}{ \Upsilon} \frac{d{^2}{ \Upsilon}}{d{\rho}^2}
 - \frac{(\alpha -2)} { \Upsilon \rho} \frac{d{ \Upsilon}}{d{\rho}} =
     - \breve{\beta}^2  \ \ \ \ \  ( \breve{\beta}  = const \in  \mathbf R )
% \label{equation-beta-sep-hyper-cyl}
  \end{gather*}
are equivalent to the following system of ordinary differential
equations:
\begin{equation}
\left\{
      \begin{array}{l}
    \frac{d{^2}{\Xi}}{d{x_0}^2} - \breve{\beta}^2  \Xi = 0, \\
 \rho^2 \frac{d{^2}{ \Upsilon}}{d{\rho}^2}
 - (\alpha -2) \rho \frac{d{ \Upsilon}}{d{\rho}}
  + \breve{\beta}^2 \rho^2 \Upsilon = 0.
     \end{array}
  \right.
  \label{eq-sep-x_2-x_0-hyper-cyl}
  \end{equation}
The first equation of the system $(\ref{eq-sep-x_2-x_0-hyper-cyl})$
may be solved using hyperbolic functions: $\
\Xi_{\breve{\beta}}(x_0) = b^1_{\breve{\beta}} \cosh{\breve{\beta}
x_0} +  b^2_{\breve{\beta}} \sinh{\breve{\beta} x_0}$;
 $\  b^1_{\breve{\beta}},  b^2_{\breve{\beta}}= const \in \mathbf R$.
 In particular, values $ b^1_{\breve{\beta}} = b^2_{\breve{\beta}}= 1$ imply that $ \
\Xi_{\breve{\beta}}(x_0) = e^{\breve{\beta} x_0}$  (see, e.g.,
\cite{BrKaeh:2016}).

The second equation of the system $(\ref{eq-sep-x_2-x_0-hyper-cyl})$
may be solved using linear independent solutions

$ \Upsilon_{\breve{\beta}}(\rho) = {\rho}^\frac{\alpha -1}{2} \left[
a^1_{\breve{\beta}} J_{\frac{\alpha -1}{2}}( \breve{\beta} \rho) +
a^2_{\breve{\beta}} Y_{\frac{\alpha -1}{2}}( \breve{\beta} \rho)
\right]$;
 $ \  a^1_{\breve{\beta}}$, $ a^2_{\breve{\beta}}= const \in \mathbf R$, \\
where $J_{\frac{\alpha -1}{2}}( \breve{\beta} \rho)$ and
$Y_{\frac{\alpha -1}{2}}( \breve{\beta} \rho)$ are Bessel functions
of the first and second kind of order $\frac{\alpha -1}{2}$ and real
argument $\breve{\beta} \rho$ (see, e.g.,
\cite{Watson:1944,BatEr-Higher-II,PolZait:Ordin-2017}).

\section{Potential Meridional Fields in $\mathbb R^4$ and Four-Dimensional $\alpha$-Meridional Mappings of the Second Kind}

Equation~\eqref{Euler-Poisson-Darboux equation-alpha} in cylindrical
coordinates within potential meridional fields in $\mathbb R^4$
leads to a family of Vekua type systems for different values of the
parameter $\alpha$ investigated by Sommen et al. in the context of
the theory of \emph{Quaternion-valued monogenic functions of axial
type} (see, e.g.,
\cite{PenaSommen:2012,PenaSabSommen:2017,ErOrelVie:2017})
\begin{gather}
\begin{cases}
\rho \left( \frac{\partial{u_0}}{\partial{x_0}} - \frac{\partial{u_{\rho}}}{\partial{\rho}} \right)  +  (\alpha -2) u_{\rho} = 0,\\[1ex]
\frac{\partial{u_0}}{\partial{\rho}}=-\frac{\partial{u_{\rho}}}{\partial{x_0}}.
\end{cases}
\label{A_4^alpha system-meridional}
\end{gather}
We should take into account that in our setting $ u_0 =
\frac{\partial{g}}{\partial{x_0}}, \ u_{\rho} = -
\frac{\partial{g}}{\partial{\rho}}.$

The static system~\eqref{alpha-axial-isotropic-system-4} is reduced
to the following two-di\-men\-sio\-nal system in the meridian
half-plane:
\begin{gather}
\begin{cases}
 \rho \left( \frac{\partial{V_0}}{\partial{x_0}} + \frac{\partial{V_{\rho}}}{\partial{\rho}} \right)  -  (\alpha -2) V_{\rho} = 0, \\[1ex]
      \frac{\partial{V_0}}{\partial{\rho}} = \frac{\partial{V_{\rho}}}{\partial{x_0}},
\end{cases}
\label{Bryukhov-merid-4}
\end{gather}
where $V_0= u_0$, $V_{\rho} = -u_{\rho}$, and $V_1  =
V_{\rho}\frac{x_1}{\rho}$, $V_2  = V_{\rho}\frac{x_2}{\rho}$, $V_3 =
V_{\rho}\frac{x_3}{\rho}$.

The principal invariants of the Jacobian matrix $\mathbf{J}(\vec
V(x))$ may be demonstrated explicitly. It should be noted that the
Jacobian matrix~\eqref{Hessian-matrix-4} is substantially
simplified:
\begin{gather}
\tiny{
\begin{pmatrix}
 \left[ -\frac{\partial{V_{\rho}}}{\partial{\rho}}
+ \frac{V_{\rho}}{\rho}(\alpha -2) \right]  &
\frac{\partial{V_{\rho}}}{\partial{x_0}} \frac{x_1}{\rho} &
 \frac{\partial{V_{\rho}}}{\partial{x_0}} \frac{x_2}{\rho} &  \frac{\partial{V_{\rho}}}{\partial{x_0}} \frac{x_3}{\rho}  \\[1ex]
\frac{\partial{V_{\rho}}}{\partial{x_0}} \frac{x_1}{\rho}  & \left(
\frac{\partial{V_{\rho}}}{\partial{\rho}} \frac{x_1^2}{\rho^2}  +
\frac{V_{\rho}}{\rho} \frac{x_2^2+x_3^2}{\rho^2}\right)  &
 \left( \frac{\partial{V_{\rho}}}{\partial{\rho}}- \frac{V_{\rho}}{\rho}\right)  \frac{x_1 x_2}{\rho^2} &
 \left( \frac{\partial{V_{\rho}}}{\partial{\rho}}- \frac{V_{\rho}}{\rho}\right)  \frac{x_1 x_3}{\rho^2}  \\[1ex]
          \frac{\partial{V_{\rho}}}{\partial{x_0}} \frac{x_2}{\rho}  & \left(
\frac{\partial{V_{\rho}}}{\partial{\rho}}-
\frac{V_{\rho}}{\rho}\right)  \frac{x_1 x_2}{\rho^2}  & \left(
\frac{\partial{V_{\rho}}}{\partial{\rho}} \frac{x_2^2}{\rho^2} +
\frac{V_{\rho}}{\rho} \frac{x_1^2+x_3^2}{\rho^2}\right) & \left(
\frac{\partial{V_{\rho}}}{\partial{\rho}}-
\frac{V_{\rho}}{\rho}\right)  \frac{x_2 x_3}{\rho^2}     \\[1ex]
          \frac{\partial{V_{\rho}}}{\partial{x_0}} \frac{x_3}{\rho}  & \left(
\frac{\partial{V_{\rho}}}{\partial{\rho}}-
\frac{V_{\rho}}{\rho}\right)  \frac{x_1 x_3}{\rho^2}   & \left(
\frac{\partial{V_{\rho}}}{\partial{\rho}}-
\frac{V_{\rho}}{\rho}\right)  \frac{x_2 x_3}{\rho^2} & \left(
\frac{\partial{V_{\rho}}}{\partial{\rho}} \frac{x_3^2}{\rho^2} +
\frac{V_{\rho}}{\rho} \frac{x_1^2+x_2^2}{\rho^2}\right)
 \label{Jacobian-merid-4}
\end{pmatrix}
}
\end{gather}

\begin{thm}
Roots of the characteristic equation~\eqref{characteristic lambda-4}
of the Jacobian matrix~\eqref{Jacobian-merid-4} are given by exact
formulas:
\begin{align*}
\lambda_{0,1}
&= \frac{V_{\rho}}{\rho},\\[1ex]
\lambda_{2,3}
&=\frac{(\alpha -2)}{2} \frac{ V_{\rho}}{ \rho}  \pm \\[1ex]
&\hspace*{5ex}\sqrt{ \frac{(\alpha -2)^2}{4} \left( \frac{V_{\rho}}{
\rho} \right)^2 - (\alpha -2) \frac{ V_{\rho}}{\rho}
\frac{\partial{V_{\rho}}}{\partial{\rho}}+ \left(
\frac{\partial{V_{\rho}}}{\partial{x_0}}\right)^2 + \left(
\frac{\partial{V_{\rho}}}{\partial{\rho}} \right)^2}.
\end{align*}
\end{thm}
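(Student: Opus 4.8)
\emph{Proof proposal.} The plan is to exploit the rank-one structure hidden in the matrix~\eqref{Jacobian-merid-4}. First I would abbreviate $a := -\frac{\partial V_{\rho}}{\partial\rho} + (\alpha-2)\frac{V_{\rho}}{\rho}$, $b := \frac{\partial V_{\rho}}{\partial x_0}$, $c := \frac{\partial V_{\rho}}{\partial\rho}$, $d := \frac{V_{\rho}}{\rho}$, and set $\hat n := \rho^{-1}(x_1,x_2,x_3)^{T}$, which is a unit vector since $\rho^{2}=x_1^{2}+x_2^{2}+x_3^{2}$. Using the identity $x_j^{2}+x_k^{2}=\rho^{2}-x_i^{2}$ for $\{i,j,k\}=\{1,2,3\}$, the lower-right $3\times3$ block of~\eqref{Jacobian-merid-4} is exactly $d\,I_3+(c-d)\,\hat n\hat n^{T}$, while the remaining entries of the first row and column are $b\,\hat n^{T}$ and $b\,\hat n$. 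Hence~\eqref{Jacobian-merid-4} is the (manifestly symmetric) matrix
\[
M = \begin{pmatrix} a & b\,\hat n^{T} \\ b\,\hat n & d\,I_3 + (c-d)\,\hat n\hat n^{T}\end{pmatrix},
\]
so all four roots of the characteristic equation are automatically real.

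Next I would split $\mathbb R^{4} = \bigl(\{0\}\oplus\hat n^{\perp}\bigr)\oplus\bigl(\mathbb R\oplus\mathrm{span}\,\hat n\bigr)$, where $\hat n^{\perp}\subset\mathbb R^{3}$ is the two-dimensional orthogonal complement of $\hat n$. For every $\vec w\in\hat n^{\perp}$ a direct computation with the block form gives $M\,(0,\vec w)^{T}=(0,\,d\,\vec w)^{T}$, because $\hat n^{T}\vec w=0$; thus $\{0\}\oplus\hat n^{\perp}$ is a two-dimensional eigenspace with eigenvalue $d$, which yields $\lambda_{0}=\lambda_{1}=V_{\rho}/\rho$. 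On the complementary plane, spanned by $e:=(1,\vec 0)$ and $f:=(0,\hat n)$, one finds $Me=a\,e+b\,f$ and $Mf=b\,e+c\,f$ (using $\hat n^{T}\hat n=1$), so the restriction of $M$ there is represented in the basis $\{e,f\}$ by $\left(\begin{smallmatrix}a & b\\ b & c\end{smallmatrix}\right)$. Equivalently, this identifies the factorization of the characteristic polynomial as $(\lambda-d)^{2}\bigl(\lambda^{2}-(a+c)\lambda+ac-b^{2}\bigr)$.

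It remains to compute the eigenvalues of $\left(\begin{smallmatrix}a & b\\ b & c\end{smallmatrix}\right)$, namely $\lambda_{2,3}=\frac{a+c}{2}\pm\sqrt{\bigl(\frac{a-c}{2}\bigr)^{2}+b^{2}}$, and to substitute back. One has $a+c=(\alpha-2)\frac{V_{\rho}}{\rho}$, giving the leading term $\frac{\alpha-2}{2}\frac{V_{\rho}}{\rho}$; and $\frac{a-c}{2}=-\frac{\partial V_{\rho}}{\partial\rho}+\frac{\alpha-2}{2}\frac{V_{\rho}}{\rho}$, so expanding the square and adding $b^{2}=\bigl(\frac{\partial V_{\rho}}{\partial x_0}\bigr)^{2}$ reproduces
\[
\frac{(\alpha-2)^{2}}{4}\Bigl(\frac{V_{\rho}}{\rho}\Bigr)^{2} - (\alpha-2)\frac{V_{\rho}}{\rho}\frac{\partial V_{\rho}}{\partial\rho} + \Bigl(\frac{\partial V_{\rho}}{\partial x_0}\Bigr)^{2} + \Bigl(\frac{\partial V_{\rho}}{\partial\rho}\Bigr)^{2},
\]
which is exactly the expression under the radical in the statement.

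I expect no genuine obstacle: the only real work consists of the two bookkeeping identities just invoked (that the $3\times3$ block equals $d\,I_3+(c-d)\,\hat n\hat n^{T}$, and the square-completion in the last display), both of which are routine. The one point worth flagging is that $\hat n$, and indeed~\eqref{Jacobian-merid-4} itself, makes sense only for $\rho>0$, i.e. away from the $x_0$-axis, so the formulas for $\lambda_{0,1}$ and $\lambda_{2,3}$ are understood on that region. As an alternative to the eigenvector decomposition one could instead verify the factorization of the characteristic polynomial directly from the principal-invariant formulas of Section~1, but the block/rank-one argument above is considerably cleaner.
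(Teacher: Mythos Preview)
Your argument is correct and takes a genuinely different route from the paper's. The paper proceeds computationally: it writes out all four principal invariants $I_{\mathbf J(\vec V)},\,II_{\mathbf J(\vec V)},\,III_{\mathbf J(\vec V)},\,IV_{\mathbf J(\vec V)}$ of the matrix~\eqref{Jacobian-merid-4} explicitly, substitutes them into the quartic~\eqref{characteristic lambda-4}, and then observes the factorisation $(\lambda - V_\rho/\rho)^{2}\bigl[\lambda^{2}-(\alpha-2)\tfrac{V_\rho}{\rho}\lambda + (\alpha-2)\tfrac{V_\rho}{\rho}\tfrac{\partial V_\rho}{\partial\rho} - (\tfrac{\partial V_\rho}{\partial x_0})^{2} - (\tfrac{\partial V_\rho}{\partial\rho})^{2}\bigr]$. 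Your block decomposition $M=\left(\begin{smallmatrix} a & b\,\hat n^{T}\\ b\,\hat n & d\,I_3+(c-d)\hat n\hat n^{T}\end{smallmatrix}\right)$ bypasses the invariant computations entirely by exhibiting the $M$-invariant splitting $\mathbb R^{4}=(\{0\}\oplus\hat n^{\perp})\oplus\mathrm{span}\{e,f\}$ directly; the double root $V_\rho/\rho$ and the residual $2\times2$ problem then fall out with essentially no calculation. This is cleaner and also explains \emph{why} the repeated eigenvalue occurs (it is the rotational symmetry about the axis $\hat n$), a point the invariant bookkeeping leaves opaque. The trade-off is that the paper's route produces the explicit invariant formulas as a by-product, and these are reused verbatim in the later special cases $\alpha=2,\,0,\,-2$; your approach would have to recompute them if they were needed independently. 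Your caveat that everything lives on $\rho>0$ matches the paper's standing assumption.
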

\begin{proof}
The principal invariants of the Jacobian
matrix~\eqref{Jacobian-merid-4} are written as
\begin{align*}
I_{\mathbf{J}(\vec V)}
&= \alpha \frac{V_{\rho}}{\rho}, \\[1ex]
II_{\mathbf{J}(\vec V)} &= - \left[ \left(
\frac{\partial{V_\rho}}{\partial{x_0}}  \right)^2 +  \left(
\frac{\partial{V_{\rho}}}{\partial{\rho}} \right)^2 \right] +
(\alpha -2) \frac{V_{\rho}}{\rho}
\frac{\partial{V_{\rho}}}{\partial{\rho}} + (2\alpha -3) \left(
\frac{V_{\rho}}{ \rho} \right)^2, \\[1ex]
III_{\mathbf{J}(\vec V)} &= -2 \frac{V_{\rho}}{\rho} \left[  \left(
\frac{\partial{V_\rho}}{\partial{x_0}}  \right)^2 +  \left(
\frac{\partial{V_{\rho}}}{\partial{\rho}} \right)^2 \right] +
(\alpha -2) \left( \frac{V_{\rho}}{ \rho} \right)^2
  \left(2 \frac{\partial{V_{\rho}}}{\partial{\rho}} +
\frac{V_{\rho}}{ \rho} \right), \\[1ex]
IV_{\mathbf{J}(\vec V)} &= -\left( \frac{V_{\rho}}{ \rho} \right)^2
\left[ \left( \frac{\partial{V_\rho}}{\partial{x_0}} \right)^2 +
\left( \frac{\partial{V_{\rho}}}{\partial{\rho}} \right)^2 \right] +
(\alpha -2) \left( \frac{V_{\rho}}{ \rho} \right)^3
\frac{\partial{V_{\rho}}}{\partial{\rho}}.
\end{align*}
The characteristic equation~\eqref{characteristic lambda-4} into the
framework of the system~\eqref{Bryukhov-merid-4} may be factored:
\begin{align*}
&\left( \lambda - \frac{V_{\rho}}{\rho} \right)^2\times\\
&\left[ \lambda^2 - (\alpha -2) \frac{ V_{\rho}}{\rho}\lambda +
(\alpha -2) \frac{ V_{\rho}}{\rho}
\frac{\partial{V_{\rho}}}{\partial{\rho}} - \left(
\frac{\partial{V_{\rho}}}{\partial{x_0}}\right)^2 - \left(
\frac{\partial{V_{\rho}}}{\partial{\rho}} \right)^2 \right] = 0.
\qedhere
\end{align*}
\end{proof}
\begin{cor}[On the set of degenerate points]
Assume that a potential meridional field $\vec V = (V_0,
\frac{x_1}{\rho} V_{\rho}, \frac{x_2}{\rho} V_{\rho},
\frac{x_3}{\rho} V_{\rho})$ satisfies the
system~\eqref{Bryukhov-merid-4}. The set of degenerate points of the
Jacobian matrix~\eqref{Jacobian-merid-4} is provided by two
independent equations:
$$
{V_{\rho}}=0,\quad
\left(\frac{\partial{V_{\rho}}}{\partial{x_0}}\right)^2 +
\left(\frac{\partial{V_{\rho}}}{\partial{\rho}}\right)^2 -
(\alpha-2)\frac{V_{\rho}}{\rho}\frac{\partial{V_{\rho}}}{\partial{\rho}}=0.
$$
\end{cor}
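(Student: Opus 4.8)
The plan is to derive the degenerate locus as an immediate consequence of the eigenvalue formulas just established. Recall that a point $x$ is a degenerate point of $\mathbf{J}(\vec V(x))$ exactly when $\det\mathbf{J}(\vec V(x)) = 0$, and that the determinant equals the product $\lambda_0\lambda_1\lambda_2\lambda_3 = IV_{\mathbf{J}(\vec V)}$ of the eigenvalues. Since the theorem supplies all four roots explicitly for a field satisfying~\eqref{Bryukhov-merid-4}, the argument is pure bookkeeping with those roots.

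First I would record that $\lambda_0 = \lambda_1 = V_{\rho}/\rho$, so that $\det\mathbf{J}(\vec V(x)) = (V_{\rho}/\rho)^2\,\lambda_2\lambda_3$, and that $\lambda_2\lambda_3$ is the constant term of the monic quadratic factor exhibited in the factorization of the characteristic equation~\eqref{characteristic lambda-4} in the proof of the theorem, namely $\lambda_2\lambda_3 = (\alpha-2)\frac{V_{\rho}}{\rho}\frac{\partial V_{\rho}}{\partial\rho} - \left(\frac{\partial V_{\rho}}{\partial x_0}\right)^2 - \left(\frac{\partial V_{\rho}}{\partial\rho}\right)^2$. (Equivalently one may quote the closed form for $IV_{\mathbf{J}(\vec V)}$ already displayed there and factor $(V_{\rho}/\rho)^2$ out of it.) This gives the single identity
$$
\det\mathbf{J}(\vec V(x)) = \left(\frac{V_{\rho}}{\rho}\right)^2\left[(\alpha-2)\frac{V_{\rho}}{\rho}\frac{\partial V_{\rho}}{\partial\rho} - \left(\frac{\partial V_{\rho}}{\partial x_0}\right)^2 - \left(\frac{\partial V_{\rho}}{\partial\rho}\right)^2\right].
$$

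Then I would use the standing assumption $\rho > 0$ in the meridian half-plane: the factor $(V_{\rho}/\rho)^2$ vanishes iff $V_{\rho} = 0$, and the bracketed factor vanishes iff $\left(\frac{\partial V_{\rho}}{\partial x_0}\right)^2 + \left(\frac{\partial V_{\rho}}{\partial\rho}\right)^2 - (\alpha-2)\frac{V_{\rho}}{\rho}\frac{\partial V_{\rho}}{\partial\rho} = 0$. A product of two real numbers vanishes iff one of them does, so the set of degenerate points is exactly the union of the two hypersurfaces cut out by these equations, as claimed. To justify describing them as two independent equations I would note that neither is a consequence of the other: on $\{V_{\rho}=0\}$ the bracket reduces to $\left(\frac{\partial V_{\rho}}{\partial x_0}\right)^2 + \left(\frac{\partial V_{\rho}}{\partial\rho}\right)^2$, which need not vanish, while the bracket can also vanish at points where $V_{\rho}\neq 0$.

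I do not expect a genuine obstacle: the statement is a direct corollary of the theorem. The only two points that warrant an explicit line are the use of $\rho>0$ to pass from $(V_{\rho}/\rho)^2=0$ to $V_{\rho}=0$, and the brief verification, sketched above, that the two equations are genuinely independent rather than one being subsumed by the other.
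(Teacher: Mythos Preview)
Your proposal is correct and follows exactly the route the paper intends: the corollary is stated without proof as an immediate consequence of Theorem~3.1, and your argument---reading off $\det\mathbf{J}(\vec V)=IV_{\mathbf{J}(\vec V)}=(V_{\rho}/\rho)^2\,\lambda_2\lambda_3$ from the factored characteristic polynomial and then splitting the product---is precisely that consequence made explicit. Your added remarks on $\rho>0$ and on the independence of the two equations are appropriate and do not deviate from the paper's implicit reasoning.
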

\begin{cor}[On the zero divergence condition]
Assume that a potential meridional field $\vec V = (V_0,
\frac{x_1}{\rho} V_{\rho}, \frac{x_2}{\rho} V_{\rho},
\frac{x_3}{\rho} V_{\rho})$ satisfies the
system~\eqref{Bryukhov-merid-4}, where $\alpha \neq 0$. Every point
$x = (x_0, x_1, x_2, x_3)$, where $\mathrm{div} \, \vec V (x_0, x_1,
x_2, x_3) = 0$, is a degenerate point of the Jacobian
matrix~\eqref{Jacobian-merid-4}.
\end{cor}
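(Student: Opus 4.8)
The plan is to read the statement off from two facts already in hand: the identity $\mathrm{div}\,\vec V = I_{\mathbf{J}(\vec V)}$, and the factorization of the characteristic polynomial established in the Theorem above. First I would recall that the first principal invariant of the Jacobian matrix is simply its trace, so $\mathrm{div}\,\vec V = J_{00}+J_{11}+J_{22}+J_{33} = I_{\mathbf{J}(\vec V)}$. In the proof of the Theorem this invariant was computed for the simplified matrix~\eqref{Jacobian-merid-4} and found to equal $\alpha\,\tfrac{V_\rho}{\rho}$; the same value is obtained directly by summing the four diagonal entries of~\eqref{Jacobian-merid-4} (using $\tfrac{x_1^2+x_2^2+x_3^2}{\rho^2}=1$) or, equivalently, by reading off the divergence form of the first equation of system~\eqref{Bryukhov-merid-4}.

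Next, suppose $x=(x_0,x_1,x_2,x_3)$ is a point at which $\mathrm{div}\,\vec V(x)=0$. Since $\rho>0$ in the meridian half-plane and $\alpha\neq 0$ by hypothesis, the identity $\mathrm{div}\,\vec V = \alpha\,\tfrac{V_\rho}{\rho}$ forces $V_\rho(x)=0$. By the Theorem the eigenvalues at $x$ then satisfy $\lambda_0 = \lambda_1 = \tfrac{V_\rho}{\rho} = 0$, so $0$ is a (multiple) root of the characteristic equation~\eqref{characteristic lambda-4}; equivalently $\det\mathbf{J}(\vec V(x)) = IV_{\mathbf{J}(\vec V)}(x) = 0$, since every term of the expression for $IV_{\mathbf{J}(\vec V)}$ obtained in the proof of the Theorem carries a factor $\bigl(\tfrac{V_\rho}{\rho}\bigr)^2$. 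By definition this means precisely that $x$ is a degenerate point of the Jacobian matrix~\eqref{Jacobian-merid-4}. Alternatively, one may simply invoke the preceding corollary on the set of degenerate points, whose first defining equation is exactly $V_\rho = 0$.

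There is essentially no analytic obstacle here; the only points requiring care are the bookkeeping identification of $\mathrm{div}\,\vec V$ with the first principal invariant $I_{\mathbf{J}(\vec V)}=\alpha\,\tfrac{V_\rho}{\rho}$ (rather than some other combination of the $J_{lm}$), and the observation that the two hypotheses $\rho>0$ and $\alpha\neq 0$ are exactly what is needed to pass from $\mathrm{div}\,\vec V=0$ to $V_\rho=0$. Once $V_\rho=0$ is secured at the point in question, the conclusion is immediate from the Theorem (or from the preceding corollary), so the proof reduces to these two short observations.
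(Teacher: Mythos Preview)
Your argument is correct and is exactly the reasoning the paper intends: the corollary is stated there without proof, as an immediate consequence of the computed invariant $I_{\mathbf{J}(\vec V)}=\alpha\,\tfrac{V_\rho}{\rho}$ and the preceding corollary on degenerate points. Your write-up supplies precisely the two observations needed, so nothing further is required.
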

\begin{rem}
The second formula of \textbf{Theorem 3.1} may be simplified:
\begin{align*}
 \lambda_{2,3}
&= \frac{(\alpha -2)}{2} \frac{V_{\rho}}{\rho}  \pm
 \sqrt{\left( \frac{\alpha -2}{2}
\frac{V_{\rho}}{\rho} - \frac{\partial{V_{\rho}}}{\partial{\rho}}
\right)^2 + \left(
\frac{\partial{V_{\rho}}}{\partial{x_0}}\right)^2}.
\end{align*}
\end{rem}

Geometric properties of the the Jacobian
matrix~\eqref{Jacobian-merid-4} allow us to introduce the concept of
four-dimensional $\alpha$-meridional mappings of the first and
second kind.
\begin{defn}
Let $\alpha$ be a real parameter, while $\Lambda \subset \mathbb
R^4$ be a simply connected open domain, where $x_1 \neq 0, x_2 \neq
0, x_3 \neq 0$. Assume that an exact solution $(u_0, u_1, u_2, u_3)$
of the system~\eqref{eq:A_4^alpha-system},  where $\alpha \neq 0$,
satisfies axially symmetric conditions ${u_1}{x_2}={u_2}{x_1}, \quad
{u_1}{x_3}={u_3}{x_1}, \quad {u_2}{x_3}={u_3}{x_2}$ in $\Lambda$.
Then mapping $u = u_0 + iu_1 + ju_2 + ku_3: \Lambda \rightarrow
\mathbb{R}^4$ is called four-dimensional $\alpha$-meridional mapping
of the first kind, while mapping $ \overline{u} = u_0 - iu_1 - ju_2
- ku_3: \Lambda \rightarrow \mathbb{R}^4$ is called four-dimensional
$\alpha$-meridional mapping of the second kind.
\end{defn}

 The set of degenerate points of every four-dimensional $\alpha$-meridional mapping
of the second kind coincides with the set of degenerate points of
the Jacobian matrix~\eqref{Jacobian-merid-4}.

\begin{rem}
The concept of four-dimensional $\alpha$-meridional mappings of the
first and second kind may be efficiently developed in the context of
the theory of \emph{Hyperbolic function theory in the skew-field of
quaternions} and the theory of \emph{Modified harmonic functions in
$\mathbb R^4$} under conditions of
$\frac{\partial{h}}{\partial{\theta}} = 0$,
$\frac{\partial{h}}{\partial{\psi}} = 0$.
\end{rem}

Let us first look at the basic properties of potential meridional
fields in the context of the theory of \emph{Modified harmonic
functions in $\mathbb R^4$} in case $\alpha = -2$, where the
systems~\eqref{A_4^alpha
system-meridional},~\eqref{Bryukhov-merid-4} are expressed as
\begin{gather*}
\begin{cases}
         \rho \left( \frac{\partial{u_0}}{\partial{x_0}} - \frac{\partial{u_{\rho}}}{\partial{\rho}} \right) - 4 u_{\rho} = 0,  \\
      \frac{\partial{u_0}}{\partial{\rho}}=-\frac{\partial{u_{\rho}}}{\partial{x_0}},
\end{cases}
\end{gather*}
\begin{gather*}
\begin{cases}
         \rho \left( \frac{\partial{V_0}}{\partial{x_0}} + \frac{\partial{V_{\rho}}}{\partial{\rho}} \right) + 4 V_{\rho} = 0,  \\
      \frac{\partial{V_0}}{\partial{\rho}} =
      \frac{\partial{V_{\rho}}}{\partial{x_0}},
\end{cases}
\end{gather*}
\begin{gather*}
\rho \left( \frac{\partial{^2}{g}}{\partial{x_0}^2} + \frac{\partial
{^2}{g}}{\partial{\rho}^2} \right) + 4
\frac{\partial{g}}{\partial{\rho}} = 0, \quad \quad \rho \left(
\frac{\partial{^2}{\hat{g}}}{\partial{x_0}^2} + \frac{\partial
{^2}{\hat{g}}}{\partial{\rho}^2} \right) - 4
\frac{\partial{\hat{g}}}{\partial{\rho}} = 0.
\end{gather*}

The Jacobian matrix~\eqref{Jacobian-merid-4} takes the following
form:
\begin{gather}
\small{
\begin{pmatrix}
 \left[ -\frac{\partial{V_{\rho}}}{\partial{\rho}}
- 4\frac{V_{\rho}}{\rho} \right]  &
\frac{\partial{V_{\rho}}}{\partial{x_0}} \frac{x_1}{\rho} &
 \frac{\partial{V_{\rho}}}{\partial{x_0}} \frac{x_2}{\rho} &  \frac{\partial{V_{\rho}}}{\partial{x_0}} \frac{x_3}{\rho}  \\[1ex]
\frac{\partial{V_{\rho}}}{\partial{x_0}} \frac{x_1}{\rho}  & \left(
\frac{\partial{V_{\rho}}}{\partial{\rho}} \frac{x_1^2}{\rho^2}  +
\frac{V_{\rho}}{\rho} \frac{x_2^2+x_3^2}{\rho^2}\right)  &
 \left( \frac{\partial{V_{\rho}}}{\partial{\rho}}- \frac{V_{\rho}}{\rho}\right)  \frac{x_1 x_2}{\rho^2} &
 \left( \frac{\partial{V_{\rho}}}{\partial{\rho}}- \frac{V_{\rho}}{\rho}\right)  \frac{x_1 x_3}{\rho^2}  \\[1ex]
          \frac{\partial{V_{\rho}}}{\partial{x_0}} \frac{x_2}{\rho}  & \left(
\frac{\partial{V_{\rho}}}{\partial{\rho}}-
\frac{V_{\rho}}{\rho}\right)  \frac{x_1 x_2}{\rho^2}  & \left(
\frac{\partial{V_{\rho}}}{\partial{\rho}} \frac{x_2^2}{\rho^2} +
\frac{V_{\rho}}{\rho} \frac{x_1^2+x_3^2}{\rho^2}\right) & \left(
\frac{\partial{V_{\rho}}}{\partial{\rho}}-
\frac{V_{\rho}}{\rho}\right)  \frac{x_2 x_3}{\rho^2}     \\[1ex]
          \frac{\partial{V_{\rho}}}{\partial{x_0}} \frac{x_3}{\rho}  & \left(
\frac{\partial{V_{\rho}}}{\partial{\rho}}-
\frac{V_{\rho}}{\rho}\right)  \frac{x_1 x_3}{\rho^2}   & \left(
\frac{\partial{V_{\rho}}}{\partial{\rho}}-
\frac{V_{\rho}}{\rho}\right)  \frac{x_2 x_3}{\rho^2} & \left(
\frac{\partial{V_{\rho}}}{\partial{\rho}} \frac{x_3^2}{\rho^2} +
\frac{V_{\rho}}{\rho} \frac{x_1^2+x_2^2}{\rho^2}\right)
 \label{Jacobian-modif-harm-merid-4}
\end{pmatrix}
}
\end{gather}

\begin{cor}
Roots of the characteristic equation~\eqref{characteristic lambda-4}
are given by the formulas
\begin{gather}
 \lambda_{0,1} = \frac{V_{\rho}}{\rho}, \; \lambda_{2,3} = - 2\frac{V_{\rho}}{\rho}  \pm
 \sqrt{\left(
2\frac{V_{\rho}}{\rho} + \frac{\partial{V_{\rho}}}{\partial{\rho}}
\right)^2 + \left(
\frac{\partial{V_{\rho}}}{\partial{x_0}}\right)^2}.
 \label{modif-harm-merid-lambda-4}
\end{gather}
\end{cor}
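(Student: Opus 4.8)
The plan is to read off this Corollary as the $\alpha = -2$ specialization of \textbf{Theorem 3.1}. First I would observe that the Jacobian matrix~\eqref{Jacobian-modif-harm-merid-4} is exactly~\eqref{Jacobian-merid-4} with $\alpha = -2$ (the $(0,0)$ entry $-\partial_{\rho}V_{\rho} + \frac{V_{\rho}}{\rho}(\alpha-2)$ becomes $-\partial_{\rho}V_{\rho} - 4\frac{V_{\rho}}{\rho}$, while every other entry is independent of $\alpha$), and that the system~\eqref{Bryukhov-merid-4} reduces in this case to $\rho(\partial_{x_0}V_0 + \partial_{\rho}V_{\rho}) + 4V_{\rho} = 0$, $\partial_{\rho}V_0 = \partial_{x_0}V_{\rho}$, displayed just above the statement. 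Consequently the characteristic equation~\eqref{characteristic lambda-4} factors exactly as in the proof of \textbf{Theorem 3.1}, now with $\alpha = -2$, and its roots are obtained by substituting $\alpha = -2$ into the two eigenvalue formulas of that theorem.

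Carrying this out, the double root $\lambda_{0,1} = \frac{V_{\rho}}{\rho}$ is unaffected by the value of $\alpha$, which gives the first pair. For the remaining two roots I would use the simplified expression for $\lambda_{2,3}$ recorded in the remark following \textbf{Theorem 3.1},
\[
\lambda_{2,3} = \frac{\alpha-2}{2}\frac{V_{\rho}}{\rho} \pm \sqrt{\left(\frac{\alpha-2}{2}\frac{V_{\rho}}{\rho} - \frac{\partial V_{\rho}}{\partial\rho}\right)^2 + \left(\frac{\partial V_{\rho}}{\partial x_0}\right)^2},
\]
and set $\alpha = -2$, so that $\frac{\alpha-2}{2} = -2$. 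This turns the leading term into $-2\frac{V_{\rho}}{\rho}$ and the bracket under the root into $\left(-2\frac{V_{\rho}}{\rho} - \frac{\partial V_{\rho}}{\partial\rho}\right)^2 = \left(2\frac{V_{\rho}}{\rho} + \frac{\partial V_{\rho}}{\partial\rho}\right)^2$, which is precisely formula~\eqref{modif-harm-merid-lambda-4}. Alternatively, starting from the unsimplified formula of \textbf{Theorem 3.1} one substitutes $\frac{(\alpha-2)^2}{4} = 4$ and $-(\alpha-2) = 4$ and completes the square $4\left(\frac{V_{\rho}}{\rho}\right)^2 + 4\frac{V_{\rho}}{\rho}\frac{\partial V_{\rho}}{\partial\rho} + \left(\frac{\partial V_{\rho}}{\partial\rho}\right)^2 = \left(2\frac{V_{\rho}}{\rho} + \frac{\partial V_{\rho}}{\partial\rho}\right)^2$ to reach the same conclusion.

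Since this is a direct specialization of an already-established theorem, there is no real obstacle; the only point needing a word of justification is the harmless sign change inside the square root, legitimate because only the square of that expression appears. If one wanted a self-contained argument one could instead re-derive the factorization from scratch by inserting $\alpha = -2$ into the principal invariants $I_{\mathbf{J}(\vec V)}, \ldots, IV_{\mathbf{J}(\vec V)}$ computed in the proof of \textbf{Theorem 3.1} and checking that~\eqref{characteristic lambda-4} becomes $\left(\lambda - \frac{V_{\rho}}{\rho}\right)^2\left[\lambda^2 + 4\frac{V_{\rho}}{\rho}\lambda - 4\frac{V_{\rho}}{\rho}\frac{\partial V_{\rho}}{\partial\rho} - \left(\frac{\partial V_{\rho}}{\partial x_0}\right)^2 - \left(\frac{\partial V_{\rho}}{\partial\rho}\right)^2\right] = 0$, whose quadratic factor has discriminant $\left(2\frac{V_{\rho}}{\rho} + \frac{\partial V_{\rho}}{\partial\rho}\right)^2 + \left(\frac{\partial V_{\rho}}{\partial x_0}\right)^2$; but this merely duplicates work already done in \textbf{Theorem 3.1}.
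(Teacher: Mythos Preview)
Your proposal is correct and matches the paper's approach: the Corollary is stated without proof precisely because it is the immediate $\alpha=-2$ specialization of \textbf{Theorem 3.1} (via the simplified form in the subsequent Remark), and your substitution $\frac{\alpha-2}{2}=-2$ together with the sign-squaring observation is exactly the intended one-line derivation.
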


Exact formulas~\eqref{modif-harm-merid-lambda-4} demonstrate
explicitly the geometric specifics of the Jacobian
matrix~\eqref{Jacobian-modif-harm-merid-4}.

 \begin{cor}
Suppose that $\alpha = -2$, $\frac{V_{\rho}}{\rho} = \frac{V_1}{x_1}
= \frac{V_2}{x_2} = \frac{V_3}{x_3}$. The set of degenerate points
of the Jacobian matrix~\eqref{Jacobian-modif-harm-merid-4} is
provided by two independent equations:
$$
{V_{\rho}}=0,\quad
\left(\frac{\partial{V_{\rho}}}{\partial{x_0}}\right)^2
+\left(\frac{\partial{V_{\rho}}}{\partial{\rho}}\right)^2 + 4
\frac{V_{\rho}}{\rho}\frac{\partial{V_{\rho}}}{\partial{\rho}}=0.
 $$
 \end{cor}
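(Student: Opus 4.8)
The plan is to obtain this statement as the specialization $\alpha = -2$ of the general \textbf{Corollary} on the set of degenerate points, or — what amounts to the same thing — to read $\det\mathbf{J}(\vec V)$ off directly from the invariant $IV_{\mathbf{J}(\vec V)}$ computed in the proof of \textbf{Theorem 3.1}. First I would recall that, by definition, a point $x$ is degenerate for the Jacobian matrix exactly when $\det\mathbf{J}(\vec V(x)) = 0$, and that $\det\mathbf{J}(\vec V) = \lambda_0\lambda_1\lambda_2\lambda_3 = IV_{\mathbf{J}(\vec V)}$; hence $x$ is degenerate if and only if at least one root of the characteristic equation~\eqref{characteristic lambda-4} vanishes there. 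The hypothesis $\frac{V_{\rho}}{\rho} = \frac{V_1}{x_1} = \frac{V_2}{x_2} = \frac{V_3}{x_3}$ is precisely what puts $\vec V$ into the meridional form for which the Jacobian matrix equals~\eqref{Jacobian-modif-harm-merid-4}, so Theorem~3.1 and the root formulas~\eqref{modif-harm-merid-lambda-4} apply verbatim.

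Next I would substitute $\alpha = -2$, i.e. $\alpha - 2 = -4$, into the factorization of~\eqref{characteristic lambda-4} established in the proof of \textbf{Theorem 3.1}: the characteristic polynomial becomes $\bigl(\lambda - \frac{V_{\rho}}{\rho}\bigr)^2$ times the quadratic factor $\lambda^2 + 4\frac{V_{\rho}}{\rho}\lambda - 4\frac{V_{\rho}}{\rho}\frac{\partial V_{\rho}}{\partial\rho} - \left(\frac{\partial V_{\rho}}{\partial x_0}\right)^2 - \left(\frac{\partial V_{\rho}}{\partial\rho}\right)^2$, consistently with~\eqref{modif-harm-merid-lambda-4}. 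Thus $\lambda_0 = \lambda_1 = \frac{V_{\rho}}{\rho}$, while $\lambda_2\lambda_3$ equals the constant term of the quadratic factor; multiplying by $\lambda_0\lambda_1 = \left(\frac{V_{\rho}}{\rho}\right)^2$ gives
\begin{equation*}
\det\mathbf{J}(\vec V) = IV_{\mathbf{J}(\vec V)} = -\left(\frac{V_{\rho}}{\rho}\right)^2\left[\left(\frac{\partial V_{\rho}}{\partial x_0}\right)^2 + \left(\frac{\partial V_{\rho}}{\partial\rho}\right)^2 + 4\,\frac{V_{\rho}}{\rho}\frac{\partial V_{\rho}}{\partial\rho}\right],
\end{equation*}
which is exactly the expression $IV_{\mathbf{J}(\vec V)}$ of \textbf{Theorem 3.1} evaluated at $\alpha = -2$.

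Finally, since $\rho > 0$, the equation $\det\mathbf{J}(\vec V) = 0$ splits into the two alternatives ${V_{\rho}}=0$ and $\left(\frac{\partial V_{\rho}}{\partial x_0}\right)^2 + \left(\frac{\partial V_{\rho}}{\partial\rho}\right)^2 + 4\frac{V_{\rho}}{\rho}\frac{\partial V_{\rho}}{\partial\rho} = 0$, which is the asserted description of the degenerate set. There is essentially no analytic obstacle here — the argument is a one-line substitution into Theorem~3.1 — and the only point deserving a word of justification is that the double eigenvalue $\frac{V_{\rho}}{\rho}$ contributes the single equation $V_{\rho}=0$, so that the degenerate locus genuinely decomposes as the union of the two hypersurfaces named in the statement rather than reducing to one of them alone.
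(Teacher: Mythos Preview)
Your proposal is correct and follows essentially the same route as the paper: the corollary is stated without proof as the direct specialization $\alpha=-2$ of the general Corollary on the set of degenerate points (equivalently, of the invariant $IV_{\mathbf{J}(\vec V)}$ from Theorem~3.1), and substituting $\alpha-2=-4$ into the equation $\left(\frac{\partial V_{\rho}}{\partial x_0}\right)^2+\left(\frac{\partial V_{\rho}}{\partial\rho}\right)^2-(\alpha-2)\frac{V_{\rho}}{\rho}\frac{\partial V_{\rho}}{\partial\rho}=0$ yields the second equation in the statement.
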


\begin{ex}
Consider a generalized axially symmetric potential in case $\alpha =
-2$ using Bessel function of the first kind of order $-\frac{3}{2}$
and real argument $\breve{\beta} \rho$:
 $$
 g(x_0,  \rho) = e^{\breve{\beta} x_0} \rho^{-\frac{3}{2}} J_{-\frac{3}{2}}( \breve{\beta} \rho),
 $$
where $ \rho > 0$.  We deal with analytic models of 2-modified
harmonic meridional fields $\vec V= (V_0,\frac{x_1}{\rho}
V_{\rho},\frac{x_2}{\rho} V_{\rho},\frac{x_3}{\rho} V_{\rho})$,
where
 \begin{align*}
V_0 = \breve{\beta} e^{\breve{\beta} x_0} \rho^{-\frac{3}{2}}
J_{-\frac{3}{2}}(\breve{\beta} \rho), \quad
 V_{\rho} = e^{\breve{\beta} x_0}
 \rho^{-\frac{3}{2}} \left( J'_{-\frac{3}{2}}( \breve{\beta} \rho) - \frac{3}{2 \rho} J_{-\frac{3}{2}}( \breve{\beta} \rho)
 \right),
 \end{align*}
 such that
 \begin{align*}
\frac{\partial{V_{\rho}}}{\partial{x_0}} &= \breve{\beta}
e^{\breve{\beta} x_0} \rho^{-\frac{3}{2}} \left( J'_{-\frac{3}{2}}(
\breve{\beta} \rho) - \frac{3}{2 \rho} J_{-\frac{3}{2}}(
\breve{\beta} \rho) \right),\\
\frac{\partial{V_{\rho}}}{\partial{\rho}} &= e^{\breve{\beta} x_0}
\rho^{-\frac{3}{2}} \left( J''_{-\frac{3}{2}}( \breve{\beta} \rho)-
\frac{3}{\rho} J'_{-\frac{3}{2}}( \breve{\beta} \rho) + \frac{15}{4
\rho^2} J_{-\frac{3}{2}}( \breve{\beta} \rho) \right).
\end{align*}

Roots of the characteristic equation~\eqref{characteristic lambda-4}
of the Jacobian matrix~\eqref{Jacobian-modif-harm-merid-4} are given
by the formulas \small{
\begin{align*}
&\lambda_{0,1} = \frac{e^{\breve{\beta} x_0}}{\rho^2 \sqrt{\rho}}
\left( J'_{-\frac{3}{2}}( \breve{\beta} \rho) - \frac{3}{2 \rho}
J_{-\frac{3}{2}}( \breve{\beta} \rho)
 \right),\\
&\lambda_{2,3}= -2 \frac{e^{\breve{\beta} x_0}}{\rho^2 \sqrt{\rho}}
\left( J'_{-\frac{3}{2}}( \breve{\beta} \rho) - \frac{3}{2 \rho}
J_{-\frac{3}{2}}( \breve{\beta} \rho)
 \right) \pm \\
&\frac{e^{\breve{\beta} x_0}}{\rho \sqrt{\rho}} \sqrt{
\breve{\beta}^2 \left( J'_{-\frac{3}{2}}( \breve{\beta} \rho) -
\frac{3}{2 \rho} J_{-\frac{3}{2}}( \breve{\beta} \rho) \right)^2 +
\left( J''_{-\frac{3}{2}}( \breve{\beta} \rho) - \frac{1}{\rho}
J'_{-\frac{3}{2}}( \breve{\beta} \rho) + \frac{3}{4 \rho^2}
J_{-\frac{3}{2}}( \breve{\beta} \rho) \right)^2}.
\end{align*}
}
 The set of degenerate points of the Jacobian
matrix~\eqref{Jacobian-modif-harm-merid-4} is provided by two
independent equations: \small{
\begin{gather*}
J'_{-\frac{3}{2}}( \breve{\beta} \rho) - \frac{3}{2 \rho} J_{-\frac{3}{2}}( \breve{\beta} \rho) =0,\\
\left( \breve{\beta}^2 - \frac{4}{ \rho^2}\right) \left(
J'_{-\frac{3}{2}}( \breve{\beta} \rho) - \frac{3}{2 \rho}
J_{-\frac{3}{2}}( \breve{\beta} \rho) \right)^2 +
 \left( J''_{-\frac{3}{2}}( \breve{\beta} \rho) - \frac{1}{\rho} J'_{-\frac{3}{2}}( \breve{\beta} \rho) + \frac{3}{4 \rho^2} J_{-\frac{3}{2}}( \breve{\beta} \rho) \right)^2 = 0.
\end{gather*}
}
 \end{ex}

\section
{The Radially Holomorphic Potential in $\mathbb R^4$ and Geometric
Properties of Quaternionic M\"{o}bius Transformations with Real
Coefficients}

 When $\alpha = 2$, the system~\eqref{Bryukhov-4} in cylindrical
coordinates within Fueter's construction in $\mathbb
R^4$~\eqref{Fueter-4} is reduced to the Cauchy-Riemann type system
in the meridian half-plane (see, e.g.,
\cite{Aksenov:2005,GuHaSp:2008,ColSabStr:2009,ColSabStr:2016,BrKaeh:2016,Br:Hefei2020})
\begin{gather*}
\begin{cases}
\frac{\partial{u_0}}{\partial{x_0}} - \frac{\partial{u_{\rho}}}{\partial{\rho}}= 0,\\[1ex]
\frac{\partial{u_0}}{\partial{\rho}}=-\frac{\partial{u_{\rho}}}{\partial{x_0}},
\end{cases}
\end{gather*}
where  $ u_0 = \frac{\partial{g}}{\partial{x_0}}, \ u_{\rho} = -
\frac{\partial{g}}{\partial{\rho}}$.
 The
system~\eqref{2-axial-isotropic-system-4} is expressed as
\begin{gather}
\begin{cases}
        \frac{\partial{V_0}}{\partial{x_0}} + \frac{\partial{V_{\rho}}}{\partial{\rho}}  = 0,  \\
      \frac{\partial{V_0}}{\partial{\rho}} = \frac{\partial{V_{\rho}}}{\partial{x_0}}.
 \end{cases}
\label{Bryukhov-merid-vec}
\end{gather}
 The generalized Stokes-Beltrami system in the meridian half-plane~\eqref{generalized
Stokes-Beltrami} becomes the Cauchy-Riemann type system in the
meridian half-plane concerning functions $g = g(x_0, \rho)$,
$\hat{g} = \hat{g}(x_0, \rho)$:
\begin{gather}
\begin{cases}
\frac{\partial{g}}{\partial{x_0}}-\frac{\partial{\hat{g}}}{\partial{\rho}}=0,\\[1ex]
\frac{\partial{g}}{\partial{\rho}}=-\frac{\partial{\hat{g}}}{\partial{x_0}}.
\end{cases}
\label{Stokes-Beltrami-1}
\end{gather}
Generalized axially symmetric potential $g = g(x_0, \rho)$ and the
Stokes stream function $\hat{g} = \hat{g}(x_0, \rho)$  satisfy
equations
\begin{gather*}
\frac{\partial{^2}{g}}{\partial{x_0}^2} +  \frac{\partial
{^2}{g}}{\partial{\rho}^2} = 0, \quad
\frac{\partial{^2}{\hat{g}}}{\partial{x_0}^2} +  \frac{\partial
{^2}{\hat{g}}}{\partial{\rho}^2}  = 0.
\end{gather*}

On the other hand, an important concept of radially holomorphic
functions in $\mathbb R^4$ was introduced by G\"{u}rlebeck, Habetha,
Spr\"{o}{\ss}ig in 2008 in the context of the theory of
\emph{Holomorphic functions in n-dimensional space}
\cite{GuHaSp:2008}.

 \begin{defn}
 The radial differential operator in $\mathbb R^4$ is defined by formula
$$
 \partial_{rad}{G}  := \frac{1}{2} \left( \frac{\partial{}}{\partial{x_0}}- I \frac{\partial{}}{\partial{\rho}} \right) G := G'  \ \ \ \ \ (G = g + I \hat{g}).
$$

Every quaternion-valued function $G = g + I \hat{g}$ satisfying a
Cauchy-Riemann type differential equation  in $\Lambda$ $(\rho > 0)$
\begin{gather}
\overline{\partial}_{rad}{G} :=  \frac{1}{2} \left(
\frac{\partial{}}{\partial{x_0}} + I
\frac{\partial{}}{\partial{\rho}} \right) G = 0
\label{CRDO-con}
\end{gather}
is called a radially holomorphic in $\Lambda$, while
quaternion-conjugate function $\overline{G} = g - I \hat{g}$ is
called a radially anti-holomorphic in $\Lambda$.
 \end{defn}

 It is easy to see that general class of exact solutions of Eq.~\eqref{CRDO-con} is
equivalently represented as general class of exact solutions of the
Cauchy-Riemann type system in the meridian half-plane concerning
functions $g = g(x_0, \rho)$, $\hat{g} = \hat{g}(x_0,
\rho)$~\eqref{Stokes-Beltrami-1} (see, e.g., \cite{Br:Hefei2020}).

 The notation $\partial_{rad}{G} := G'$ has been justified in \cite{GuHaSp:2008} by some clear statements.
 In particular, Eq.~\eqref{CRDO-con} implies that
\begin{align}
G' = \frac{\partial{G}}{\partial{x_0}}.
\label{primitive}
\end{align}

 \begin{defn}
 Suppose that  a radially holomorphic  function $G = g + I \hat{g}$
in $\Lambda$ satisfies a differential equation
\begin{align*}
 G' = F,
\end{align*}
where $F = u_0 + I u_{\rho}$ characterizes a radially holomorphic
function in $\Lambda$. The function $G$ is called a radially
holomorphic primitive of $F$  in $\Lambda$.
\end{defn}

Elementary radially holomorphic functions in $\mathbb R^4$ may be
introduced as elementary functions of a quaternionic variable
 $ G = G(x) = g(x_0, \rho) + I \hat{g}_{\rho}(x_0, \rho)$ satisfying the following relations:
\begin{align*}
[x^{n} &:= r^{n}(\cos{n\varphi} + I \sin{n\varphi})]' = n x^{n-1};\\
[e^{x} &:= e^{x_0}(\cos{\rho}+I \sin{\rho})]'= e^{x};\\
[\cos{x} &:= \frac{1}{2}( e^{-Ix} + e^{Ix})]'= - \sin{x};\\
[\sin{x} &:= \frac{I}{2}( e^{-Ix} - e^{Ix})]'= \cos{x};\\
[\ln{x} &:= \ln r +I \varphi]' = x^{-1}.
\end{align*}

Applications of radially holomorphic functions in $\mathbb R^4$ have
been missed in the next book of G\"{u}rlebeck, Habetha,
Spr\"{o}{\ss}ig \cite{GuHaSp:2016}.
  Tools of the radially holomorphic potential in $\mathbb R^4$ in conjunction with tools of the theory
of \emph{Potential meridional vector fields in $\mathbb R^4$} allow
us to make up for the gap.

\begin{defn}
Radially holomorphic primitive $G = g + I \hat{g}$ in simply
connected open domains $\Lambda$ $ ( \rho > 0)$ into the framework
of the systems~\eqref{Bryukhov-merid-vec},~\eqref{Stokes-Beltrami-1}
is called the radially holomorphic potential in $\mathbb R^4$.
\end{defn}

When $\alpha =2$, the principal invariants of the Jacobian
matrix~\eqref{Jacobian-merid-4}
\begin{gather}
\begin{pmatrix}
  -\frac{\partial{V_{\rho}}}{\partial{\rho}} & \frac{\partial{V_{\rho}}}{\partial{x_0}} \frac{x_1}{\rho} &
 \frac{\partial{V_{\rho}}}{\partial{x_0}} \frac{x_2}{\rho} &  \frac{\partial{V_{\rho}}}{\partial{x_0}} \frac{x_3}{\rho}  \\[1ex]
\frac{\partial{V_{\rho}}}{\partial{x_0}} \frac{x_1}{\rho}  & \left(
\frac{\partial{V_{\rho}}}{\partial{\rho}} \frac{x_1^2}{\rho^2}  +
\frac{V_{\rho}}{\rho} \frac{x_2^2+x_3^2}{\rho^2}\right)  &
 \left( \frac{\partial{V_{\rho}}}{\partial{\rho}}- \frac{V_{\rho}}{\rho}\right)  \frac{x_1 x_2}{\rho^2} &
 \left( \frac{\partial{V_{\rho}}}{\partial{\rho}}- \frac{V_{\rho}}{\rho}\right)  \frac{x_1 x_3}{\rho^2}  \\[1ex]
          \frac{\partial{V_{\rho}}}{\partial{x_0}} \frac{x_2}{\rho}  & \left(
\frac{\partial{V_{\rho}}}{\partial{\rho}}-
\frac{V_{\rho}}{\rho}\right)  \frac{x_1 x_2}{\rho^2}  & \left(
\frac{\partial{V_{\rho}}}{\partial{\rho}} \frac{x_2^2}{\rho^2} +
\frac{V_{\rho}}{\rho} \frac{x_1^2+x_3^2}{\rho^2}\right) & \left(
\frac{\partial{V_{\rho}}}{\partial{\rho}}-
\frac{V_{\rho}}{\rho}\right)  \frac{x_2 x_3}{\rho^2}     \\[1ex]
          \frac{\partial{V_{\rho}}}{\partial{x_0}} \frac{x_3}{\rho}  & \left(
\frac{\partial{V_{\rho}}}{\partial{\rho}}-
\frac{V_{\rho}}{\rho}\right)  \frac{x_1 x_3}{\rho^2}   & \left(
\frac{\partial{V_{\rho}}}{\partial{\rho}}-
\frac{V_{\rho}}{\rho}\right)  \frac{x_2 x_3}{\rho^2} & \left(
\frac{\partial{V_{\rho}}}{\partial{\rho}} \frac{x_3^2}{\rho^2} +
\frac{V_{\rho}}{\rho} \frac{x_1^2+x_2^2}{\rho^2}\right)
 \label{Jacobian-merid-4-Fueter}
\end{pmatrix}
\end{gather}
are written as
\begin{gather*}
I_{\mathbf{J}(\vec V)} = 2 \frac{V_{\rho}}{\rho}, \quad
II_{\mathbf{J}(\vec V)} = - \left[ \left(
\frac{\partial{V_\rho}}{\partial{x_0}}  \right)^2 +  \left(
\frac{\partial{V_{\rho}}}{\partial{\rho}} \right)^2 \right] +
 \left( \frac{V_{\rho}}{ \rho} \right)^2, \\[1ex]
III_{\mathbf{J}(\vec V)} = -2 \frac{V_{\rho}}{\rho} \left[  \left(
\frac{\partial{V_\rho}}{\partial{x_0}}  \right)^2 +  \left(
\frac{\partial{V_{\rho}}}{\partial{\rho}} \right)^2 \right], \\
IV_{\mathbf{J}(\vec V)} = -\left( \frac{V_{\rho}}{ \rho} \right)^2
\left[ \left( \frac{\partial{V_\rho}}{\partial{x_0}} \right)^2 +
\left( \frac{\partial{V_{\rho}}}{\partial{\rho}} \right)^2 \right].
\end{gather*}

 \begin{cor}
 Roots of the characteristic equation~\eqref{characteristic lambda-4} of the Jacobian
matrix~\eqref{Jacobian-merid-4-Fueter} are  given by the formulas
\begin{gather}
 \lambda_{0,1} = \frac{V_{\rho}}{\rho}, \ \ \ \ \
\lambda_{2,3} = \pm  \sqrt{\left(
\frac{\partial{V_{\rho}}}{\partial{\rho}}\right)^2 + \left(
\frac{\partial{V_{\rho}}}{\partial{x_0}} \right)^2} =  \pm |F'|.
\label{lambda-merid-1}
\end{gather}
 \end{cor}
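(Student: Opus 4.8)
The plan is to obtain this corollary as the $\alpha = 2$ specialization of \textbf{Theorem 3.1}, followed by a one-line identification of the resulting radical with $|F'|$ via the Cauchy-Riemann type system in the meridian half-plane.

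First I would substitute $\alpha = 2$ directly into the two exact formulas of \textbf{Theorem 3.1}. The value $\lambda_{0,1} = V_{\rho}/\rho$ carries over verbatim. For $\lambda_{2,3}$, setting $\alpha - 2 = 0$ annihilates the leading term $\tfrac{(\alpha-2)}{2}\tfrac{V_{\rho}}{\rho}$ together with the two $(\alpha-2)$-proportional terms under the square root, leaving $\lambda_{2,3} = \pm\sqrt{(\partial{V_{\rho}}/\partial{x_0})^2 + (\partial{V_{\rho}}/\partial{\rho})^2}$. Equivalently, one can read this off the factored characteristic polynomial displayed in the proof of \textbf{Theorem 3.1}: with $\alpha = 2$ the quadratic factor collapses to $\lambda^2 - \left[(\partial{V_{\rho}}/\partial{x_0})^2 + (\partial{V_{\rho}}/\partial{\rho})^2\right] = 0$. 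As a consistency check, I would also confirm that the four principal invariants displayed just above the statement (i.e. $I,II,III,IV$ at $\alpha = 2$) are exactly the elementary symmetric functions of the multiset $\{V_{\rho}/\rho,\ V_{\rho}/\rho,\ |F'|,\ -|F'|\}$, which they visibly are.

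Second, to justify the final equality $\lambda_{2,3} = \pm|F'|$, I would recall that in the case $\alpha = 2$ the Fueter variables $F = u_0 + I u_{\rho}$ satisfy the Cauchy-Riemann type system in the meridian half-plane displayed at the start of Section~4, hence $F$ is radially holomorphic and $F' = \partial{F}/\partial{x_0} = \partial{u_0}/\partial{x_0} + I\,\partial{u_{\rho}}/\partial{x_0}$ by~\eqref{primitive}. Therefore $|F'|^2 = (\partial{u_0}/\partial{x_0})^2 + (\partial{u_{\rho}}/\partial{x_0})^2$, and using $\partial{u_0}/\partial{x_0} = \partial{u_{\rho}}/\partial{\rho}$ together with $V_{\rho} = -u_{\rho}$ (so the squares are unchanged) this equals $(\partial{V_{\rho}}/\partial{\rho})^2 + (\partial{V_{\rho}}/\partial{x_0})^2$, precisely the radicand obtained above.

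I do not anticipate a genuine obstacle here: the corollary is a direct specialization of the theorem plus a one-line use of the Cauchy-Riemann type relations. The only point requiring a little care is the bookkeeping of the sign/role swap $(V_0, V_{\rho}) = (u_0, -u_{\rho})$ when passing between the field $\vec V$ underlying the Jacobian matrix~\eqref{Jacobian-merid-4-Fueter} and the radially holomorphic function $F$, so that the modulus $|F'|$ is matched correctly to the matrix entries.
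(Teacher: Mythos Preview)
Your proposal is correct and follows precisely the approach implied by the paper: the corollary is presented in the paper without a separate proof, as a direct specialization of \textbf{Theorem 3.1} to $\alpha = 2$, and your additional justification of the equality with $\pm |F'|$ via the Cauchy-Riemann type system and the relation $V_{\rho} = -u_{\rho}$ is exactly what is intended (and in fact more explicit than the paper itself).
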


Exact formulas~\eqref{lambda-merid-1} demonstrate explicitly the
geometric specifics of the Jacobian
matrix~\eqref{Jacobian-merid-4-Fueter}.

Remarkable extensions of the basic concepts of M\"{o}bius
transformations in four dimensions (see, e.g., \cite {Ahlfors:1981,
Leut:CV17}) have been developed, in particular, by Porter
\cite{Porter:1998}, Cao \cite{Cao:2007}, Parker and Short
\cite{ParkerShort:CMFT2009}, Bisi and Gentili \cite{BisiGent:2009},
Stoppato \cite{Stoppato:2011}.

Meanwhile, geometric properties of quaternionic M\"{o}bius
transformations with real coefficients $a,b,c,d$ within Fueter's
construction in $\mathbb R^4$~\eqref{Fueter-4} $F(x) = (ax + b)(cx +
d)^{-1}$, where $ad - bc = 1$, may be characterized in the context
of the theory of \emph{Potential meridional vector fields in
$\mathbb R^4$}.

\begin{ex}
Consider quaternionic M\"{o}bius transformations with real
coefficients, where $c=1$, $ad - b = 1$.
  Radially anti-holomorphic function is expressed as $ \overline{F}(x) = - (\overline{x} +
d)^{-1} + a$. The radially holomorphic potential in $\mathbb R^4$ is
written as  $$G = - \ln{(x + d)} + ax.$$   We deal with analytic
models of meridional fields $\vec V= (V_0,\frac{x_1}{\rho}
V_{\rho},\frac{x_2}{\rho} V_{\rho},\frac{x_3}{\rho} V_{\rho})$,
where
\begin{align*}
V_0 = - \frac{x_0 + d}{(x_0 + d)^2 +\rho^2} + a, \quad V_{\rho}
= - \frac{\rho}{(x_0 + d)^2 + \rho^2},\\[1ex]
\frac{\partial{V_{\rho}}}{\partial{x_0}} = \frac{2\rho(x_0 +
d)}{[(x_0 + d)^2 + \rho^2]^2}, \quad
\frac{\partial{V_{\rho}}}{\partial{\rho}} = \frac{-(x_0 + d)^2 +
\rho^2}{[(x_0 + d)^2 + \rho^2]^2}.
\end{align*}

 The Jacobian
matrix~\eqref{Jacobian-merid-4-Fueter} takes the following form:
\begin{gather}
\begin{pmatrix}
 \frac{(x_0+ d)^2-\rho^2}{[(x_0+ d)^2+ \rho^2]^2} & \frac{2(x_0 + d)x_1}{[(x_0 + d)^2 + \rho^2]^2} &
 \frac{2(x_0 + d)x_2}{[(x_0 + d)^2 + \rho^2]^2} &  \frac{2(x_0 + d)x_3}{[(x_0 + d)^2 + \rho^2]^2}  \\[1ex]
\frac{2(x_0 + d)x_1}{[(x_0 + d)^2 + \rho^2]^2}  & \frac{-(x_0+
d)^2-\rho^2+2x_1^2}{[(x_0+ d)^2+ \rho^2]^2}  &
 \frac{2x_1 x_2}{[(x_0 + d)^2 + \rho^2]^2} &
 \frac{2x_1 x_3}{[(x_0 + d)^2 + \rho^2]^2}  \\[1ex]
    \frac{2(x_0 + d)x_2}{[(x_0 + d)^2 + \rho^2]^2}  & \frac{2x_1 x_2}{[(x_0 + d)^2 + \rho^2]^2}  & \frac{-(x_0+
d)^2-\rho^2+2x_2^2}{[(x_0+ d)^2+ \rho^2]^2} & \frac{2x_2 x_3}{[(x_0 + d)^2 + \rho^2]^2}  \\[1ex]
   \frac{2(x_0 + d)x_3}{[(x_0 + d)^2 + \rho^2]^2}  & \frac{2x_1 x_3}{[(x_0 + d)^2 + \rho^2]^2}  & \frac{2x_2 x_3}{[(x_0 + d)^2 + \rho^2]^2} & \frac{-(x_0+
d)^2-\rho^2+2x_3^2}{[(x_0+ d)^2+ \rho^2]^2}
 \label{Jacobian-merid-4-Moebius}
\end{pmatrix}
\end{gather}

Roots of the characteristic equation~\eqref{characteristic lambda-4}
of the Jacobian matrix~\eqref{Jacobian-merid-4-Moebius} are given by
the formulas
\begin{align*}
\lambda_{0,1} = -\frac{1}{(x_0 + d)^2 +\rho^2}, \quad \lambda_{2, 3}
= \pm \frac{1}{(x_0 + d)^2 +\rho^2}.
\end{align*}
Thus, the set of degenerate points of the Jacobian
matrix~\eqref{Jacobian-merid-4-Moebius} is empty.
 \end{ex}

 Geometric properties of elementary radially holomorphic functions in
$\mathbb R^4$ raise new issues for consideration.

 \section
{Analytic Models of Potential Meridional Fields in $\mathbb R^4$
Generated by Bessel Functions of the First Kind of Integer Order and
Quaternionic Argument}

 First outlines of the concept of integral transforms with quaternion-valued
 kernels within Fueter's construction in $\mathbb R^4$ were presented by the author in 2003
\cite{Br:2003}. Now tools of the Fourier-Fueter cosine and sine
transforms of real-valued original functions $\tilde{\eta} =
\tilde{\eta}(\tau)$ allow us to obtain integral representations for
Bessel functions of the first kind of integer order $n$ and
quaternionic argument $x$.

In accordance with the basic concepts of the theory of functions of
a complex variable and the one-sided Laplace transform (see, e.g.,
\cite{BatEr-IntTran,LavSh,BroSemMusMue:2015}), every complex-valued
original function $\tilde{\eta} = \tilde{\eta}(\tau)$ of a real
argument $\tau$ satisfies the following conditions:
\begin{enumerate}
  \item  the function $\tilde{\eta}(\tau)$ satisfies the H\"{o}lder's condition for
  any $\tau$ except for points $\tau = \tau_1,\tau_2,\ldots$, where
   the function $\tilde{\eta}(\tau)$ has discontinuities of the first kind
     (there exists a finite number of such points for every finite interval),
  \item the function $\tilde{\eta}(\tau) = 0$ for any $\tau<0$,
  \item $|\tilde{\eta}(\tau)| < M e^{s_0 \tau}$ with certain constants $M > 0, s_0 \geq 0$ for any $\tau \geq 0$.
  \end{enumerate}

In accordance with the H\"{o}lder's condition,
 there exist certain constants $A>0$, $0< \gamma \leq 1$, $\delta_0>0$ such that
 $|\tilde{\eta}(\tau+\delta)-\tilde{\eta}(\tau)|
\leq A|\delta|^{\gamma}$ for any $\delta$, where $|\delta| \leq
\delta_0$.
  The number $s_0$ is referred to as the growth rate of the function
  $\tilde{\eta}(\tau)$; $s_0$ may take the value 0 in case of bounded functions $\tilde{\eta}(\tau)$.

  To extend some concepts of integral transforms of real-valued original
  functions $\tilde{\eta} = \tilde{\eta}(\tau)$ within Fueter's construction in $\mathbb
R^4$~\eqref{Fueter-4}, a correspondence between two functions
$\tilde{\eta}(\tau)$ and $F(x)$
   in the form
\begin{gather*}
 F(x) := \int_{0}^{\infty} \tilde{\eta}(\tau) K(x,\tau) d\tau,
\end{gather*}
where quaternion-valued function $K(x,\tau)$ belongs to Fueter's
construction in $\mathbb R^4$ for any $\tau \geq 0$, shall
henceforth be referred to as integral transform with
quaternion-valued kernel $K(x,\tau)$.

 \begin{defn}
  Let $ \tilde{\eta}(\tau)$ be a real-valued original function and $\rho > 0$. Every integral transform with
quaternion-valued kernel $K(x,\tau) = e^{-x \tau}$
\begin{gather*}
 F(x) := \mathfrak{LF} \{ \tilde{\eta}(\tau) ; x \}
 = \int_{0}^{\infty} \tilde{\eta}(\tau) e^{-x \tau} d\tau
\end{gather*}
 is called the one-sided Laplace-Fueter transform of $
\tilde{\eta}(\tau)$.
 \end{defn}

Analytic properties of radially anti-holomorphic functions in
$\mathbb R^4$
\begin{align*}
  \overline{F}(x) = \int_{0}^{\infty} \tilde{\eta}(\tau) \overline{e^{-x \tau}} d\tau
= \int_{0}^{\infty} \tilde{\eta}(\tau) e^{-x_0\tau}[\cos(\rho\tau) +
I \sin(\rho\tau)]d\tau
 \end{align*}
are of particular interest to the theory of \emph{Potential
meridional vector fields in $\mathbb R^4$}.

 \begin{defn}
Let $ \tilde{\eta}(\tau)$ be a real-valued original function and
$\rho > 0$. Every integral transform with quaternion-valued kernel
$K(x,\tau) = \cos(x\tau)$
\begin{align*}
 F(x) := \mathfrak{FF}c \{ \tilde{\eta}(\tau) ; x \}
 = \int_{0}^{\infty} \tilde{\eta}(\tau) \cos(x\tau) d\tau = \frac{1}{2} \int_{0}^{\infty} \tilde{\eta}(\tau) ( e^{-Ix\tau} + e^{Ix\tau}) d\tau
 \end{align*}
is called the Fourier-Fueter cosine transform of
$\tilde{\eta}(\tau)$.
 \end{defn}

\begin{rem}
Consider an independent quaternionic variable of the following form:
$ y = Ix = -\rho + Ix_0$. The Fourier-Fueter cosine transform of $
\tilde{\eta}(\tau)$ may be equivalently represented by means of the
one-sided Laplace-Fueter transform of $ \tilde{\eta}(\tau)$:
\begin{align*}
 \mathfrak{FF}c \{ \tilde{\eta}(\tau) ; x \}
 = \frac{1}{2} [ \mathfrak{LF} \{ \tilde{\eta}(\tau) ; y \} + \mathfrak{LF} \{ \tilde{\eta}(\tau) ; -y \} ].
 \end{align*}
 \end{rem}

Analytic models of potential meridional fields generated by the
Fourier-Fueter cosine transform of $ \tilde{\eta}(\tau)$ are
described as $\vec V= (V_0,\frac{x_1}{\rho}
V_{\rho},\frac{x_2}{\rho} V_{\rho},\frac{x_3}{\rho} V_{\rho})$,
where
\begin{align*}
V_0 = \int_{0}^{\infty} \tilde{\eta}(\tau)
\cosh(\rho\tau)\cos(x_0\tau)d\tau,
 \quad
V_{\rho} = \int_{0}^{\infty} \tilde{\eta}(\tau)
\sinh(\rho\tau)\sin(x_0\tau)d\tau.
 \end{align*}

 \begin{defn}
Let $ \tilde{\eta}(\tau)$ be a real-valued original function and
$\rho > 0$. Every integral transform with quaternion-valued kernel
$K(x,\tau) = \sin(x\tau)$
\begin{align*}
 F(x) := \mathfrak{FF}s \{ \tilde{\eta}(\tau) ; x \}
  = \int_{0}^{\infty} \tilde{\eta}(\tau) \sin(x\tau) d\tau = \frac{I}{2} \int_{0}^{\infty} \tilde{\eta}(\tau) ( e^{-Ix\tau} - e^{Ix\tau}) d\tau
 \end{align*}
is called the Fourier-Fueter sine transform of $
\tilde{\eta}(\tau)$.
 \end{defn}

 \begin{rem}
 The Fourier-Fueter sine transform of $ \tilde{\eta}(\tau)$ may be equivalently represented by
means of the one-sided Laplace-Fueter transform of $
\tilde{\eta}(\tau)$:
\begin{align*}
 \mathfrak{FF}s \{ \tilde{\eta}(\tau) ; x \}
 = \frac{I}{2} [ \mathfrak{LF} \{ \tilde{\eta}(\tau) ; y \} - \mathfrak{LF} \{ \tilde{\eta}(\tau) ; -y \}].
 \end{align*}
 \end{rem}

 Analytic models of potential meridional fields generated by
the Fourier-Fueter sine transform of $ \tilde{\eta}(\tau)$ are
described as $\vec V= (V_0,\frac{x_1}{\rho}
V_{\rho},\frac{x_2}{\rho} V_{\rho},\frac{x_3}{\rho} V_{\rho})$,
where
\begin{align*}
V_0 = \int_{0}^{\infty} \tilde{\eta}(\tau)
\cosh(\rho\tau)\sin(x_0\tau)d\tau, \quad V_{\rho} = -
\int_{0}^{\infty} \tilde{\eta}(\tau)
\sinh(\rho\tau)\cos(x_0\tau)d\tau.
 \end{align*}

Important properties of quaternionic power series with real
coefficients studied by Leutwiler and Hempfling in the context of
\emph{Modified quaternionic analysis in $\mathbb R^4$} (see, e.g.,
\cite{Leut:CV17,HempLeut:1996}) allow us to extend every Bessel
function of the first kind of integer order and complex argument
(see, e.g., \cite{Watson:1944,BatEr-Higher-II,BatEr-IntTran}) within
Fueter's construction in $\mathbb R^4$ ~\eqref{Fueter-4} from the
disk of radius $r$ in the complex plane $D_r = \{ (x_0,x_1): x_0^2 +
x_1^2 < r^2 \} $ to the ball of radius $r$ in $\mathbb R^4$ $B_r^4 =
\{ (x_0,x_1,x_2,x_3): x_0^2 + x_1^2 + x_2^2 + x_3^2 < r^2 \} $. As a
corollary, every Bessel function of the first kind of integer order
$n$ and quaternionic argument is represented by a quaternionic power
series with real coefficients:
\begin{align*}
J_{n}(x) = \sum_{m = 0}^\infty \frac{(-1)^m }{m!(n+m)!} \left(
\frac{x}{2} \right)^{n+2m}.
\end{align*}
Furthermore, every elementary radially holomorphic function $\left(
\frac{x}{2} \right)^{m}$ may be expanded in a series of Bessel
functions of the first kind of integer order $(m+2n)$ and
quaternionic argument:
\begin{align*}
 \left( \frac{x}{2} \right)^{m} = \sum_{n = 0}^\infty \frac{(m+2n)(m+n-1)!}{n!} J_{m+2n}(x) \quad (m = 1,2,3,\ldots)
\end{align*}

 Using the Fourier-Fueter cosine transform of $\ \tilde{\eta}(\tau) = \frac{\cos (2n \arccos
\tau )}{\sqrt{1 - \tau^2}}$, integral representations for Bessel
functions of the first kind of even integer order and quaternionic
argument may be obtained:
\begin{align*}
\frac{\pi}{2} (-1)^n J_{2n}(x)
  = \mathfrak{FF}c \{ \tilde{\eta}(\tau) ; x \}
 = \int_{0}^{1} \frac{ \cos (2n \arccos \tau )}{\sqrt{1 - \tau^2}} \cos(x\tau)
 d\tau.
\end{align*}

Using the Fourier-Fueter sine transform of $\tilde{\eta}(\tau) =
\frac{\cos [(2n+1) \arccos \tau ]}{\sqrt{1 - \tau^2}}$, integral
representations for Bessel functions of the first kind of odd
integer order and quaternionic argument may be obtained:
\begin{align*}
\frac{\pi}{2} (-1)^n J_{2n+1}(x)
  = \mathfrak{FF}s \{ \tilde{\eta}(\tau) ; x \}
 = \int_{0}^{1} \frac{ \cos [(2n+1) \arccos \tau ]}{\sqrt{1 - \tau^2}} \sin(x\tau)
 d\tau.
\end{align*}

\begin{ex}
Bessel function of the first kind of order zero and quaternionic
argument is represented as $J_{0}(x) = \sum_{m = 0}^\infty
\frac{(-1)^m }{(m!)^2} \left( \frac{x}{2} \right)^{2m}.$

Integral representation of Bessel function of the first kind of
order zero and quaternionic argument is given by the formula
\begin{gather*}
 J_{0}(x)
  = \frac{2}{\pi} \mathfrak{FF}c \{ \tilde{\eta}(\tau) ; x \}
 = \frac{2}{\pi} \int_{0}^{1} \frac{1}{\sqrt{1 - \tau^2}}\cos(x\tau)
 d\tau.
\end{gather*}

 As seen, remarkable analytic model of a potential meridional field generated by Bessel
function of the first kind of order zero and quaternionic argument
is described as $\vec V= (V_0,\frac{x_1}{\rho}
V_{\rho},\frac{x_2}{\rho} V_{\rho},\frac{x_3}{\rho} V_{\rho})$,
where
\begin{align*}
V_0 = \int_{0}^{1} \frac{\cosh(\rho\tau)}{\sqrt{1 - \tau^2}}
\cos(x_0\tau)d\tau, \quad  V_{\rho} = \int_{0}^{1}
\frac{\sinh(\rho\tau)}{\sqrt{1 - \tau^2}} \sin(x_0\tau)d\tau.
\end{align*}
 \end{ex}

\section{Four-Dimensional Harmonic Meridional Mappings of the Second Kind
and Gradient Dynamical Systems with Harmonic Potential}

 Some remarkable properties of the Hessian matrix $\mathbf{H}(h(x))$ of harmonic
potentials $h= h(x_0,x_1,x_2,x_3)$ in the general four-dimensional
setting were described by Yanushauskas in 1980 by means of
homogeneous harmonic polynomials \cite{Yanush:1980}.

The geometric specifics of the Hessian matrix $\mathbf{H}(h(x))$
within harmonic potential meridional fields $\vec V= \mathrm{grad} \
h$ may be efficiently studied using the Jacobian
matrix~\eqref{Jacobian-merid-4} in case $\alpha = 0$:
\begin{gather}
\tiny{
\begin{pmatrix}
 \left( -\frac{\partial{V_{\rho}}}{\partial{\rho}} - 2\frac{V_{\rho}}{\rho} \right) & \frac{\partial{V_{\rho}}}{\partial{x_0}} \frac{x_1}{\rho} &
 \frac{\partial{V_{\rho}}}{\partial{x_0}} \frac{x_2}{\rho} &  \frac{\partial{V_{\rho}}}{\partial{x_0}} \frac{x_3}{\rho}  \\[1ex]
\frac{\partial{V_{\rho}}}{\partial{x_0}} \frac{x_1}{\rho}  & \left(
\frac{\partial{V_{\rho}}}{\partial{\rho}} \frac{x_1^2}{\rho^2}  +
\frac{V_{\rho}}{\rho} \frac{x_2^2+x_3^2}{\rho^2}\right)  &
 \left( \frac{\partial{V_{\rho}}}{\partial{\rho}}- \frac{V_{\rho}}{\rho}\right)  \frac{x_1 x_2}{\rho^2} &
 \left( \frac{\partial{V_{\rho}}}{\partial{\rho}}- \frac{V_{\rho}}{\rho}\right)  \frac{x_1 x_3}{\rho^2}  \\[1ex]
          \frac{\partial{V_{\rho}}}{\partial{x_0}} \frac{x_2}{\rho}  & \left(
\frac{\partial{V_{\rho}}}{\partial{\rho}}-
\frac{V_{\rho}}{\rho}\right)  \frac{x_1 x_2}{\rho^2}  & \left(
\frac{\partial{V_{\rho}}}{\partial{\rho}} \frac{x_2^2}{\rho^2} +
\frac{V_{\rho}}{\rho} \frac{x_1^2+x_3^2}{\rho^2}\right) & \left(
\frac{\partial{V_{\rho}}}{\partial{\rho}}-
\frac{V_{\rho}}{\rho}\right)  \frac{x_2 x_3}{\rho^2}     \\[1ex]
          \frac{\partial{V_{\rho}}}{\partial{x_0}} \frac{x_3}{\rho}  & \left(
\frac{\partial{V_{\rho}}}{\partial{\rho}}-
\frac{V_{\rho}}{\rho}\right)  \frac{x_1 x_3}{\rho^2}   & \left(
\frac{\partial{V_{\rho}}}{\partial{\rho}}-
\frac{V_{\rho}}{\rho}\right)  \frac{x_2 x_3}{\rho^2} & \left(
\frac{\partial{V_{\rho}}}{\partial{\rho}} \frac{x_3^2}{\rho^2} +
\frac{V_{\rho}}{\rho} \frac{x_1^2+x_2^2}{\rho^2}\right)
 \label{Jacobian-merid-4-0}
\end{pmatrix}.
}
\end{gather}

Let us look at the basic properties of harmonic potential meridional
 fields in $\mathbb R^4$, where the systems~\eqref{A_4^alpha
system-meridional},~\eqref{Bryukhov-merid-4} are expressed as
\begin{gather*}
\begin{cases}
         \rho \left( \frac{\partial{u_0}}{\partial{x_0}} - \frac{\partial{u_{\rho}}}{\partial{\rho}} \right) - 2 u_{\rho} = 0,  \\
      \frac{\partial{u_0}}{\partial{\rho}}=-\frac{\partial{u_{\rho}}}{\partial{x_0}},
\end{cases}
\end{gather*}
\begin{gather*}
\begin{cases}
         \rho \left( \frac{\partial{V_0}}{\partial{x_0}} + \frac{\partial{V_{\rho}}}{\partial{\rho}} \right) + 2 V_{\rho} = 0,  \\
      \frac{\partial{V_0}}{\partial{\rho}} =
      \frac{\partial{V_{\rho}}}{\partial{x_0}},
\end{cases}
\end{gather*}
\begin{gather*}
\rho \left( \frac{\partial{^2}{g}}{\partial{x_0}^2} + \frac{\partial
{^2}{g}}{\partial{\rho}^2} \right) + 2
\frac{\partial{g}}{\partial{\rho}} = 0, \quad \quad \rho \left(
\frac{\partial{^2}{\hat{g}}}{\partial{x_0}^2} + \frac{\partial
{^2}{\hat{g}}}{\partial{\rho}^2} \right) - 2
\frac{\partial{\hat{g}}}{\partial{\rho}} = 0.
\end{gather*}

 The characteristic equation $(\ref{characteristic lambda-4})$ in case $\alpha = 0$ is written as incomplete equation:
\begin{gather}
 \lambda^4 + II_{\mathbf{J}(\vec V)} \lambda^2 - III_{\mathbf{J}(\vec V)} \lambda + IV_{\mathbf{J}(\vec V)} = 0,
\label{character lambda-4}
\end{gather}
where
\begin{align*}
II_{\mathbf{J}(\vec V)} &= - \left[ \left(
\frac{\partial{V_\rho}}{\partial{x_0}}  \right)^2 +  \left(
\frac{\partial{V_{\rho}}}{\partial{\rho}} \right)^2 \right]
 - 2 \frac{V_{\rho}}{\rho} \frac{\partial{V_{\rho}}}{\partial{\rho}} - 3 \left( \frac{V_{\rho}}{ \rho} \right)^2,\\[1ex]
 III_{\mathbf{J}(\vec V)}
&= - 2 \frac{V_{\rho}}{\rho} \left[ \left(
\frac{\partial{V_\rho}}{\partial{x_0}}  \right)^2 +  \left(
\frac{\partial{V_{\rho}}}{\partial{\rho}} \right)^2 \right]
 - 2 \left( \frac{V_{\rho}}{ \rho} \right)^2 \left( 2 \frac{\partial{V_{\rho}}}{\partial{\rho}} + \frac{V_{\rho}}{ \rho} \right),\\[1ex]
IV_{\mathbf{J}(\vec V)} &= - \left( \frac{V_{\rho}}{ \rho} \right)^2
\left[ \left( \frac{\partial{V_\rho}}{\partial{x_0}} \right)^2 +
\left( \frac{\partial{V_{\rho}}}{\partial{\rho}} \right)^2 \right]
 - 2 \left( \frac{V_{\rho}}{ \rho} \right)^3 \frac{\partial{V_{\rho}}}{\partial{\rho}}.
\end{align*}

\begin{cor}
Roots of the characteristic equation~\eqref{character lambda-4} are
given by the formulas
\begin{gather}
 \lambda_{0,1} = \frac{V_{\rho}}{\rho}, \; \lambda_{2,3} = - \frac{V_{\rho}}{\rho}  \pm
 \sqrt{\left(
\frac{V_{\rho}}{\rho} + \frac{\partial{V_{\rho}}}{\partial{\rho}}
\right)^2 + \left(
\frac{\partial{V_{\rho}}}{\partial{x_0}}\right)^2}.
\label{incomplete lambda-4}
\end{gather}
The set of degenerate points of the Jacobian
matrix~\eqref{Jacobian-merid-4-0} is provided by two independent
equations:
\begin{gather}
{V_{\rho}}=0,\quad
\left(\frac{\partial{V_{\rho}}}{\partial{x_0}}\right)^2
+\left(\frac{\partial{V_{\rho}}}{\partial{\rho}}\right)^2 + 2
\frac{V_{\rho}}{\rho}\frac{\partial{V_{\rho}}}{\partial{\rho}}=0.
\label{incomplete degenerate-4}
\end{gather}
\end{cor}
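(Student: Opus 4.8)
The plan is to recognize this Corollary as the $\alpha=0$ instance of Theorem~3.1 and of the Corollary ``On the set of degenerate points'' of Section~3, and to carry out the two small simplifications that occur at $\alpha=0$. First I would return to the factorization of the characteristic equation~\eqref{characteristic lambda-4} obtained in the proof of Theorem~3.1 and substitute $\alpha=0$, using $(\alpha-2)\tfrac{V_{\rho}}{\rho}=-2\tfrac{V_{\rho}}{\rho}$. The squared factor $\bigl(\lambda-\tfrac{V_{\rho}}{\rho}\bigr)^2$ is untouched and gives at once the double root $\lambda_{0,1}=\tfrac{V_{\rho}}{\rho}$; moreover, since $I_{\mathbf{J}(\vec V)}=\alpha\tfrac{V_{\rho}}{\rho}$ vanishes, the $\lambda^3$ term disappears and one is left with the incomplete characteristic equation~\eqref{character lambda-4}, whose remaining quadratic factor is $\lambda^2+2\tfrac{V_{\rho}}{\rho}\lambda-2\tfrac{V_{\rho}}{\rho}\tfrac{\partial V_{\rho}}{\partial\rho}-\bigl(\tfrac{\partial V_{\rho}}{\partial x_0}\bigr)^2-\bigl(\tfrac{\partial V_{\rho}}{\partial\rho}\bigr)^2$.

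Next I would solve that quadratic factor by the quadratic formula. The only algebraic step worth noting is that its discriminant $\bigl(\tfrac{V_{\rho}}{\rho}\bigr)^2+2\tfrac{V_{\rho}}{\rho}\tfrac{\partial V_{\rho}}{\partial\rho}+\bigl(\tfrac{\partial V_{\rho}}{\partial\rho}\bigr)^2+\bigl(\tfrac{\partial V_{\rho}}{\partial x_0}\bigr)^2$ becomes, after completing the square in its first three terms, the sum of squares $\bigl(\tfrac{V_{\rho}}{\rho}+\tfrac{\partial V_{\rho}}{\partial\rho}\bigr)^2+\bigl(\tfrac{\partial V_{\rho}}{\partial x_0}\bigr)^2$. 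Taking square roots then yields $\lambda_{2,3}=-\tfrac{V_{\rho}}{\rho}\pm\sqrt{\bigl(\tfrac{V_{\rho}}{\rho}+\tfrac{\partial V_{\rho}}{\partial\rho}\bigr)^2+\bigl(\tfrac{\partial V_{\rho}}{\partial x_0}\bigr)^2}$, as claimed; the same expression also drops out of the equivalent simplified form of $\lambda_{2,3}$ noted in Section~3 upon setting $\alpha=0$.

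For the degenerate set I would use that $x$ is a degenerate point of~\eqref{Jacobian-merid-4-0} exactly when $\det\mathbf{J}(\vec V(x))=IV_{\mathbf{J}(\vec V)}=\lambda_0\lambda_1\lambda_2\lambda_3=0$. Here $\lambda_0\lambda_1=\bigl(\tfrac{V_{\rho}}{\rho}\bigr)^2$, while $\lambda_2\lambda_3$ equals the constant term of the quadratic factor above, namely $-\bigl[\bigl(\tfrac{\partial V_{\rho}}{\partial x_0}\bigr)^2+\bigl(\tfrac{\partial V_{\rho}}{\partial\rho}\bigr)^2+2\tfrac{V_{\rho}}{\rho}\tfrac{\partial V_{\rho}}{\partial\rho}\bigr]$; multiplying recovers the expression for $IV_{\mathbf{J}(\vec V)}$ displayed just above the Corollary. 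Since $\rho>0$, this product vanishes precisely on the union of the locus $V_{\rho}=0$ and the locus $\bigl(\tfrac{\partial V_{\rho}}{\partial x_0}\bigr)^2+\bigl(\tfrac{\partial V_{\rho}}{\partial\rho}\bigr)^2+2\tfrac{V_{\rho}}{\rho}\tfrac{\partial V_{\rho}}{\partial\rho}=0$, which are the two stated equations --- equivalently, this is the $\alpha=0$ case of that Corollary, in which the coefficient $-(\alpha-2)$ becomes $2$.

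I do not expect a genuine obstacle: the whole statement is substitution into results already established, and the only nonroutine moves are completing the square inside the radical and identifying $\lambda_2\lambda_3$ with the constant term of the quadratic factor. The single point I would state explicitly, to avoid ambiguity, is that the ``two independent equations'' describe the degenerate set as a union of two zero sets rather than as a simultaneous system of two equations.
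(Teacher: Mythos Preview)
Your proposal is correct and is exactly the approach the paper has in mind: the Corollary is stated without its own proof because it is simply the specialization of Theorem~3.1 (and of the Corollary on degenerate points) to $\alpha=0$, with the completing-the-square step already recorded in Remark~3.4. Your added clarification that the two equations describe a union of loci is accurate and matches how the paper uses the phrase ``two independent equations'' throughout.
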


Exact formulas~\eqref{incomplete lambda-4},~\eqref{incomplete
degenerate-4} demonstrate explicitly the geometric specifics of the
Jacobian matrix~\eqref{Jacobian-merid-4-0}.

\begin{defn}
Let $\Lambda \subset \mathbb R^4$ be a simply connected open domain,
where \\ $x_1 \neq 0, x_2 \neq 0, x_3 \neq 0$. Assume that an exact
solution $(u_0, u_1, u_2, u_3)$ of the system $(R_4)$ satisfies
axially symmetric conditions ${u_1}{x_2}={u_2}{x_1}, \quad
{u_1}{x_3}={u_3}{x_1}, \quad {u_2}{x_3}={u_3}{x_2}$ in $\Lambda$.
Then mapping $u = u_0 + iu_1 + ju_2 + ku_3: \Lambda \rightarrow
\mathbb{R}^4$ is called four-dimensional harmonic meridional mapping
of the first kind, while mapping $ \overline{u} = u_0 - iu_1 - ju_2
- ku_3: \Lambda \rightarrow \mathbb{R}^4$ is called four-dimensional
harmonic meridional mapping of the second kind.
\end{defn}

The set of degenerate points of every four-dimensional harmonic
meridional mapping of the second kind coincides with the set of
degenerate points of the Jacobian matrix~\eqref{Jacobian-merid-4-0}.

\begin{ex}
Consider a generalized axially symmetric potential in case $\alpha
=0$ using Bessel function of the first kind of order $-\frac{1}{2}$
and real argument $\breve{\beta} \rho$:
 $$
 g(x_0,  \rho) = e^{\breve{\beta} x_0} \rho^{-\frac{1}{2}} J_{-\frac{1}{2}}( \breve{\beta}
 \rho).
 $$
 We deal with analytic models of harmonic potential meridional fields
 $\vec V= (V_0,\frac{x_1}{\rho} V_{\rho},\frac{x_2}{\rho} V_{\rho},\frac{x_3}{\rho} V_{\rho})$, where
 \begin{align*}
V_0 = \breve{\beta} e^{\breve{\beta} x_0} \rho^{-\frac{1}{2}}
J_{-\frac{1}{2}}(\breve{\beta} \rho), \quad
 V_{\rho} = e^{\breve{\beta} x_0}
 \rho^{-\frac{1}{2}} \left( J'_{-\frac{1}{2}}( \breve{\beta} \rho) - \frac{1}{2 \rho} J_{-\frac{1}{2}}( \breve{\beta} \rho)
 \right),
 \end{align*}
 such that
 \begin{align*}
\frac{\partial{V_{\rho}}}{\partial{x_0}} &= \breve{\beta}
e^{\breve{\beta} x_0} \rho^{-\frac{1}{2}} \left( J'_{-\frac{1}{2}}(
\breve{\beta} \rho) - \frac{1}{2 \rho} J_{-\frac{1}{2}}(
\breve{\beta} \rho) \right),\\
\frac{\partial{V_{\rho}}}{\partial{\rho}} &= e^{\breve{\beta} x_0}
\rho^{-\frac{1}{2}} \left( J''_{-\frac{1}{2}}( \breve{\beta} \rho)-
\frac{1}{\rho} J'_{-\frac{1}{2}}( \breve{\beta} \rho) + \frac{3}{4
\rho^2} J_{-\frac{1}{2}}( \breve{\beta} \rho) \right).
\end{align*}

Roots of the characteristic equation~\eqref{character lambda-4} are
given by these formulas:
 \small{
\begin{align*}
&\lambda_{0,1} = \frac{e^{\breve{\beta} x_0}}{\rho \sqrt{\rho}}
\left( J'_{-\frac{1}{2}}( \breve{\beta} \rho) - \frac{1}{2 \rho}
J_{-\frac{1}{2}}( \breve{\beta} \rho)
 \right),\\
&\lambda_{2,3}= - \frac{e^{\breve{\beta} x_0}}{\rho \sqrt{\rho}}
\left( J'_{-\frac{1}{2}}( \breve{\beta} \rho) - \frac{1}{2 \rho}
J_{-\frac{1}{2}}( \breve{\beta} \rho)
 \right) \pm \\
&\frac{e^{\breve{\beta} x_0}}{ \sqrt{\rho}} \sqrt{ \breve{\beta}^2
\left( J'_{-\frac{1}{2}}( \breve{\beta} \rho) - \frac{1}{2 \rho}
J_{-\frac{1}{2}}( \breve{\beta} \rho) \right)^2 + \left(
J''_{-\frac{1}{2}}( \breve{\beta} \rho) + \frac{1}{4 \rho^2}
J_{-\frac{1}{2}}( \breve{\beta} \rho) \right)^2}.
\end{align*}
 }
 The set of degenerate points of the Jacobian
matrix~\eqref{Jacobian-merid-4-0} is provided by two independent
equations:
 \small{
\begin{gather*}
J'_{-\frac{1}{2}}( \breve{\beta} \rho) - \frac{1}{2 \rho} J_{-\frac{1}{2}}( \breve{\beta} \rho) =0,\\
\left( \breve{\beta}^2 - \frac{1}{ \rho^2}\right) \left(
J'_{-\frac{1}{2}}( \breve{\beta} \rho) - \frac{1}{2 \rho}
J_{-\frac{1}{2}}( \breve{\beta} \rho) \right)^2 + \left(
J''_{-\frac{1}{2}}( \breve{\beta} \rho) + \frac{1}{4 \rho^2}
J_{-\frac{1}{2}}( \breve{\beta} \rho) \right)^2 = 0.
\end{gather*}
 }
 \end{ex}

Roots  of the characteristic equation~\eqref{character lambda-4} may
be interpreted as the Poincar\'{e} stability coefficients  in the
context of the theory of \emph{Gradient dynamical systems with
harmonic potential}, where $\vec V= \frac {d{\vec x}}{dt} =
\mathrm{grad} \ h$, $ \ \Delta \ h= 0$. It should be noted that
properties of the sets of positive, zero and negative values of the
roots $\lambda_0, \lambda_1,\lambda_2,\lambda_3$ are of particular
interest to \emph{Stability theory} (see, e.g.,
\cite{Chetayev:1961,Perko:2001,Kozlov:1993,KozlovFurta:2001}).


\begin{thebibliography}{1}

  \bibitem {Ahlfors:1981} Ahlfors,~L.V.:
  {M\"{o}bius Transformations in Several Dimensions}. Ordway Lectures in Mathematics. University of Minnesota, Minneapolis (1981)

 \bibitem {Aksenov:2005} Aksenov,~A.V.:
 {Linear differential relations between solutions of the class of Euler-Poisson-Darboux equations}. {J. Math. Sci.}  \textbf{130}(5), 4911--4940 (2005)

\bibitem {ArnoldGeom} Arnold,~V.I.:
{Geometrical Methods in the Theory of Ordinary Differential
Equations}. Grundlehren der mathematischen Wissenschaften, vol. 250,
2nd edn. Springer, New York (1988)

\bibitem {BisiGent:2009} Bisi,~C., Gentili,~G.:
{M\"{o}bius transformations and the Poincar\'{e} distance in the
quaternionic setting}. Indiana Univ. Math. J. \textbf{58}(6),
2729--2764 (2009)

 \bibitem {BraDelSom:1982} Brackx,~F., Delange,~R., Sommen,~F.:
 {Clifford Analysis}. Research Notes in Mathematics, vol. 76.
 Pitman, Boston  (1982)

 \bibitem{BraDel:2003} Brackx,~F., Delanghe,~R.:
 {On harmonic potential fields and the structure of monogenic functions,} Z. Anal. Anwend. \textbf{22}(2), 261--273 (2003)

\bibitem {BroSemMusMue:2015} Bronshtein,~I.N., Semendyayev,~K.A., Musiol,~G., Muehlig,~H.:
 {Handbook of Mathematics}, 6th edn. Springer, Berlin, Heidelberg (2015)

   \bibitem {Br:2003} Bryukhov,~D.A.:
  {Axially symmetric generalization of the Cauchy-Riemann system and modified Clifford analysis} (2003)
  https://arxiv.org/abs/math/0302186v1 [math.CV]

\bibitem {BrKaeh:2016} Bryukhov,~D., K\"{a}hler,~U.:
{The static Maxwell system in three dimensional axially symmetric
inhomogeneous media and axially symmetric generalization of the
Cauchy-Riemann system}. Adv. Appl. Clifford Algebras \textbf{27}(2),
993--1005 (2017)

\bibitem {Br:Hefei2020} Bryukhov,~D.:
{Electrostatic fields in some special inhomogeneous media and new
generalizations of the Cauchy-Riemann system}. Adv. Appl. Clifford
Algebras \textbf{31}, 61 (2021)

\bibitem {Cao:2007} Cao~W.:
{On the classification of four-dimensional M\"{o}bius
transformations}. Proc. Edinb. Math. Soc. \textbf{50}(1), 49--62
(2007)

\bibitem {Chetayev:1961} Chetayev,~N.G.:
{The Stability of Motion}. Translated from the Russian by Morton
Nadler. Pergamon Press, London (1961)

  \bibitem {ColSabStr:2009} Colombo,~F., Sabadini,~I., Struppa,~D.C.:
  {Slice monogenic functions}. Israel J. Math. \textbf{171}, 385--403 (2009)

 \bibitem {ColG-CerSab:2011} Colombo,~F., Gonz\'{a}lez-Cervantes,~J.O., Sabadini,~I.:
 {Comparison of the various notions of slice monogenic functions and their
 variations}.
 In: Simos,~T.E. et~al. (eds.) Numer. Anal. Appl. Math. ICNAAM 2011.
 AIP Conf. Proc., vol. 1389, New York, pp. 264--267 (2011)

  \bibitem{ColSabStr:2011} Colombo,~F., Sabadini,~I., Struppa,~D.C.:
   {Noncommutative Functional Calculus: Theory and Applications of Slice Hyperholomorphic Functions}.
   Progress in Mathematics,  vol.289. Birkh\"{a}user, Basel (2011)

   \bibitem{ColSabStr:2016} Colombo,~F., Sabadini,~I., Struppa,~D.C.:
   {Entire Slice Regular Functions}. SpringerBriefs in Mathematics, Springer (2016)

 \bibitem {Colton} Colton,~D.:
   {Arthur Erd\'{e}lyi 1908-1977}.
  {Bull. London Math. Soc.}  \textbf{11}, 191--207 (1979)

 \bibitem {DelSomSouc:1992} Delange,~R., Sommen,~F., Sou\v{c}ek,~V.:
 {Cliflord Algebra and Spinor-Valued Functions: A Function Theory for the Dirac
 Operator}. Mathematics and Its Applications, vol. 53. Kluwer Academic, Dordrecht (1992)

 \bibitem {Del:2007} Delanghe,~R.:
 {On homogeneous polynomial solutions of the Riesz system and their harmonic potentials}. Complex Var. Elliptic Equ. \textbf{52}(10-11), 1047--1062 (2007)

\bibitem {Dzhaiani} Dzhaiani,~G.V.:
 {The Euler-Poisson-Darboux Equation} [in Russian]. Izd. Tbilisskogo Gos. Univ., Tbilisi (1984)

  \bibitem {BatEr-Higher-II} Erd\'{e}lyi,~A., Magnus,~W., Oberhettinger,~F., Tricomi,~F.~G.:
{Higher Transcendental Functions}, Vol.II (Bateman Manuscript
Project). McGraw-Hill, New York (1953)

  \bibitem {BatEr-IntTran} Erd\'{e}lyi,~A., Magnus,~W., Oberhettinger,~F., Tricomi,~F.~G.:
{Tables of Integral Transforms}, Vol.I (Bateman Manuscript Project).
McGraw-Hill, New York (1954)

 \bibitem {ErOrelVie:2017} Eriksson,~S.-L., Orelma,~H., Vieira~N.:
{Two-sided hypergenic functions}. Adv. Appl. Clifford Algebras \textbf{27}(1), 111--123 (2017)

 \bibitem {ErOrel:2019} Eriksson,~S.-L., Orelma,~H.:
{Hyperbolic function theory in the skew-field of quaternions}. Adv. Appl. Clifford Algebras  \textbf{29}, 97 (2019)

 \bibitem {Fueter:1934} Fueter,~R.:
{Die funktionentheorie der differentialgleichungen $\Delta u=0$ und
$\Delta\Delta u=0$ mit vier reellen variablen}. Comment. Math. Helv.
\textbf{7}, 307--330 (1934/1935).

\bibitem {GalG-CerSab:2015} Gal,~S.G., Gonz\'{a}lez-Cervantes,~J.O., Sabadini,~I.:
 {On some geometric properties of slice regular functions of a quaternion variable.}
 Complex Var. Elliptic Equ. \textbf{60}(10), 1431--1455 (2015)

 \bibitem {GentStruppa:2006}  Gentili~G., Struppa,~D.C.:
{A new approach to Cullen-regular functions of a quaternionic
variable}, C. R. Acad. Sci. Paris \textbf{342}, 741--744 (2006)

 \bibitem {GenStop:2012} Gentili,~G., Stoppato,~C.:
 {Power series and analyticity over the quaternions}. Math. Ann.
\textbf{352}(1), 113--131 (2012)

 \bibitem {GenStop:2021} Gentili,~G., Stoppato,~C.:
 {Geometric function theory over quaternionic slice domains}.
 J. Math. Anal. Appl. \textbf{495}(2), 124780 (2021)

\bibitem {Gilmore} Gilmore,~R.:
 {Catastrophe Theory for Scientists and Engineers}. Dover Publications, New York (1993)

 \bibitem {GrPlaksa:2009} Gryshchuk,~S.V., Plaksa,~S.A.:
 {Integral representations of generalized axially symmetric potentials in a simply connected domain}.
 Ukr. Math. J. \textbf{61}(2), 195--213 (2009)

\bibitem {GuHaSp:2008} G\"{u}rlebeck,~K., Habetha,~K., Spr\"{o}{\ss}ig,~W.:
{Holomorphic Functions in the Plane and $n$-Dimensional Space}. Birkh\"{a}user, Basel (2008)

\bibitem {GuHaSp:2016} G\"{u}rlebeck,~K., Habetha,~K., Spr\"{o}{\ss}ig,~W.:
{Application of Holomorphic Functions in Two and Higher Dimensions}.
Birkh\"{a}user, Basel (2016)

\bibitem {HempLeut:1996} Hempfling,~Th., Leutwiler,~H.: {Modified quaternionic analysis in $\BR^4$}.
  In: Dietrich~V. et al. (eds.) Clifford Algebras and Their Application in Mathematical Physics.
  Fundamental Theories of Physics, vol. 94, pp. 227--237. Kluwer Acad. Publ., Dordrecht (1998)

\bibitem {Hirsch:1984} Hirsch,~M.W.:
 {The dynamical systems approach to differential equations}.
 Bull. Am. Math. Soc. (N.S.) \textbf{11}(1), 1--64 (1984)

 \bibitem {HirschSmaleDevaney:2004} Hirsch,~M.W., Smale,~S., Devaney,~R.L.:
 {Differential Equations, Dynamical Systems, and an Introduction to Chaos}.
 Pure and Applied Mathematics, vol. 60, 2nd edn. Academic Press, San Diego, CA (2004)

  \bibitem {Huber:1954} Huber,~A.:
 {On the uniqueness of generalized axially symmetric potentials}.
 Annals of Mathematics \textbf{60}(2), 351--358 (1954)

\bibitem {Kozlov:1993} Kozlov,~V.V.:
  {On the degree of instability}.
  J. Appl. Math. Mech. \textbf{57}(5), 771--776 (1993)

\bibitem {KozlovFurta:2001} Kozlov,~V.V., Furta,~S.D.:
{The first Lyapunov method for strongly non-linear systems of
differential equations} [in Russian]. In: Matrosov~V.M. et al.
(eds.) Nonlinear Mechanics, pp. 89--113. Physical and Mathematical
Literature Pub., Moscow (2001)

 \bibitem {LavSh} Lavrentyev,~M.A., Shabat,~B.V.:
{Methods of the Theory of Functions of a Complex Variable} [in Russian]. 5th edn., Nauka, Moscow (1987)

  \bibitem {Leut:CV17} Leutwiler,~H.:
 {Modified Clifford analysis}. Complex Var. Theory Appl. \textbf{17}(3-4), 153--171 (1992)

 \bibitem {LeZe:CMFT2004} Leutwiler,~H., Zeilinger,~P.:
{On quaternionic analysis and its modifications}.  Comput. Methods
Funct. Theory \textbf{4}(1), 159--183 (2004)

\bibitem {Leut:2018-AACA} Leutwiler,~H.:
{Modified spherical harmonics in four dimensions}. Adv. Appl.
Clifford Algebras \textbf{28}, 49 (2018)

\bibitem {Leut:2020-CAOT} Leutwiler,~H.:
 {Contributions to modified spherical harmonics in four
dimensions}, Complex  Anal. Oper. Theory \textbf{14}, 67 (2020)

\bibitem {Lounesto:2001} Lounesto,~P.:
 {Clifford Algebras and Spinors}, 2nd edn.
London Math. Soc. Lecture Notes Series, vol. 286. Cambridge
University Press, Cambridge (2001)

  \bibitem {NemStep:1960} Nemytskii,~V.V., Stepanov,~V.V.:
  {Qualitative Theory of Differential Equations}.
  Princeton Mathematical Series, vol. 22. Princeton University Press, Princeton, NJ (1960)

  \bibitem {NucciTam:2012} Nucci,~M.C., Tamizhmani,~K.M.:
 {Lagrangians for biological models}. J. Nonlinear Math. Phys.
 \textbf{19}(3), 1250021 (2012)

\bibitem {ParkerShort:CMFT2009} Parker,~J.R., Short,~I.:
{Conjugacy classification of quaternionic M\"{o}bius
transformations}. Comput. Methods Funct. Theory \textbf{9}(1),
13--25 (2009)

 \bibitem {PenaSommen:2012} Pe\~{n}a Pe\~{n}a,~D., Sommen,~F.:
{Vekua-type systems related to two-sided monogenic functions}. Complex. Anal. Oper. Theory \textbf{6}(2), 397--405 (2012)

\bibitem {PenaSabSommen:2017} Pe\~{n}a~Pe\~{n}a,~D., Sabadini,~I., Sommen,~F.:
 {On two-sided monogenic functions of axial type}. Mosc. Math. J. \textbf{17}(1), 129--143 (2017)

 \bibitem {Perko:2001} Perko,~L.:
 {Differential Equations and Dynamical Systems}. Texts in Applied Mathematics
 vol.7, 3rd edn. Springer, New York (2001)

  \bibitem {PogoruiShapiro:2004} Pogorui,~A., Shapiro,~M.V.:
  {On the structure of the set of zeros of quaternionic polynomials}. Complex Var. \textbf{49}(6), 379--389 (2004)

 \bibitem {PolZait:Ordin-2017} Polyanin,~A.D., Zaitsev,~V.F.:
 {Handbook of Exact Solutions for Ordinary Differential Equations}. 2nd edn., Chapman and Hall/CRC Press, Boca Raton/London (2003)

 \bibitem {Porter:1998} Porter,~R.M.:
 {Quaternionic M\"{o}bius transformations and loxodromes}.
 Complex Var. Theory Appl. \textbf{36}(3), 285--300 (1998)

 \bibitem {Sinai-Princeton:1994} Sinai,~Ya.G.:
 {Topics in Ergodic Theory}. Princeton Mathematical Series, vol. 44.
 Princeton University Press, Princeton, New Jersey (1994)

\bibitem {SteinWeiss:1971} Stein,~E.M., Weiss,~G.:
 {Introduction to Fourier Analysis on Euclidean Spaces}. Princeton Mathematical Series, vol. 32.
 Princeton University Press, Princeton, New Jersey (1971)

\bibitem {Stoppato:2011} Stoppato,~C.:
{Regular Moebius transformations of the space of quaternions}. Ann.
Glob. Anal. Geom. \textbf{39}(4), 387--401 (2011)

   \bibitem {Sudbery:1979} Sudbery,~A.:
  {Quaternionic analysis}. Math. Proc. Camb. Philos. Soc., \textbf{85}, 199--225 (1979)

 \bibitem {Topuridze:2003} Topuridze,~N.:
 {On the roots of polynomials over division algebras}. Georgian Math. J. \textbf{10}(4), 745--762 (2003)

 \bibitem {Walter:1998} Walter~W.:
 {Ordinary Differential Equations}. Graduate Texts in Mathematics, vol. 182. Springer, New York (1998)

  \bibitem {Watson:1944} Watson,~G.N.:
 {A Treatise on the Theory of Bessel Functions}, 2nd edn. Cambridge Mathematical Library, Cambridge University Press, Cambridge (1995)

 \bibitem {Weinstein:1953} Weinstein,~A.:
 {Generalized axially symmetric potential theory}.
 Bull. Am. Math. Soc. \textbf{59}(1), 20--38 (1953)

\bibitem {Yanush:1980} Yanushauskas,~A.I.:
 {Lewy's theorem on the zeros of the Hessian of a harmonic function}.
 Sib. Math. J. \textbf{21}, 861--865 (1980)

 \bibitem {ZachThoe:1986} Zachmanoglou,~E.C., Thoe,~D.W.:
 {Introduction to Partial Differential Equations with
Applications}. Dover Publications, New York (1986)

 \bibitem {Zwillinger} Zwillinger,~D.:
 {Handbook of Differential Equations}, vol. 1,
 3rd edn.  Academic Press, San Diego (1998)

\end{thebibliography}
\end{document}